\newtheorem{thm}{Theorem}[section]
\newtheorem{prop}[thm]{Proposition}
\newtheorem{cor}[thm]{Corollary}
\newtheorem{lem}[thm]{Lemma}
\theoremstyle{definition}
\newtheorem{define}[thm]{Definition}
\theoremstyle{remark}
\newtheorem{rem}[thm]{Remark}
\newtheorem{example}[thm]{Example}
\newcommand{\T}{\mathbb{T}}
\renewcommand{\subset}{\subseteq}
\renewcommand{\tilde}{\widetilde}
\DeclareMathOperator{\spin}{{spin}}
\DeclareMathOperator{\Spin}{{Spin}}
\DeclareMathOperator{\Sym}{{Sym}}
\newcommand{\bC}{\mathbb{C}}
\newcommand{\bF}{\mathbb{F}}
\newcommand{\bR}{\mathbb{R}}
\newcommand{\bZ}{\mathbb{Z}}
\newcommand{\alphas}{\boldsymbol \alpha}
\newcommand{\betas}{\boldsymbol \beta}
\newcommand{\cH}{\mathcal{H}}
\newcommand{\cL}{\mathcal{L}}
\newcommand{\CF}{\mathit{CF}}
\newcommand{\CFhat}{\widehat{\CF}}
\newcommand{\HF}{\mathit{HF}}
\newcommand{\HFR}{\mathit{HFR}}
\newcommand{\HFRhat}{\widehat{\HFR}}
\newcommand{\CFK}{\mathit{CFK}}
\newcommand{\HFK}{\mathit{HFK}}
\newcommand\CFKhat{\widehat{\CFK}}
\newcommand\HFhat{\widehat{\HF}}
\newcommand\HFKhat{\widehat{\HFK}}
\newcommand \HFKR{ \widehat{\mathit{HFKR}}}
\numberwithin{equation}{section}
\newcommand\co{\colon}
\title{A note on real Heegaard Floer homology and localization}
\author{Kristen Hendricks}
\thanks{KH was partially supported by NSF CAREER grant DMS-2019396 and NSF grant DMS-2505573.}
\address{Department of Mathematics, Rutgers University, New Brunswick, NJ, USA}
\begin{document}

\begin{abstract} We prove the existence of a localization spectral sequence for the hat variant of Guth and Manolescu's recent construction of real Heegaard Floer homology, and apply it to branched double covers and strongly invertible knots. Our construction applies to real Lagrangian Floer homology in exact symplectic manifolds equipped with anti-symplectic involutions more generally, and may be of independent interest to symplectic geometers.
\end{abstract}

\maketitle

\tableofcontents

\section{Introduction}

In the past few years, there has been considerable interest in gauge-theoretic invariants of three-manifolds and four-manifolds equipped with an involution, called real manifolds. In the Seiberg-Witten case, this line of study originated with Tian-Wang \cite{TW:real} and Nakamura \cite{Nakamura:real, Nakamura:intersection} and has had numerous recent remarkable applications, including for example \cite{Kato, Miyazawa, KPT:cables, HKM:branched, Baraglia}.

Ozsv{\'a}th and Szab{\'o}'s Heegaard Floer homology is a Lagrangian Floer theoretic counterpart of Seiberg-Witten theory \cite{OSDisks, OSProperties}. Recently, Guth and Manolescu \cite{GM:real} gave a construction of real Heegaard Floer homology associated to a three-manifold $Y$ with an orientation-preserving involution $\tau$ whose fixed point set has codimension two; such a pair $(Y, \tau)$ will be called a \emph{real three-manifold}. More generally, they give a construction of real Lagrangian Floer theory, as we now explain. 

Suppose that $(M, \omega)$ is a symplectic manifold with two Lagrangians $L_0$ and $L_1$, satisfying suitable hypotheses for defining Lagrangian Floer homology. (We will detail our hypotheses explicitly in the theorems that follow and in Section~\ref{sec:realhf}.) Suppose further that $M$ admits an anti-symplectic involution $R$ which exchanges $L_0$ and $L_1$; that is, $L_1=R(L_0)$. In this case, the fixed point set $M^R$ is also a Lagrangian submanifold of $M$. In its simplest description, the real Lagrangian Floer homology of $(M, L_0, L_1)$ defined in \cite{GM:real} is
\[ \HFR(M, L_0, L_1) := \HF(M, L_0, M^R). \]
Here the right-hand side is the ordinary Lagrangian Floer homology of the two Lagrangians $L_0$ and $M^R$ in $M$. The application to the case of Heegaard Floer homology then proceeds as follows. Recall that the Heegaard Floer homology of a 3-manifold $Y$ is computed by choosing a Heegaard diagram $(\Sigma, \alphas, \betas, w)$ such that $\Sigma$ is a surface of genus $g$ with two sets of attaching curves $\alphas$ and $\betas$ specifying handlebodies $U_{\alpha}$ and $U_{\beta}$ such that $Y= U_{\alpha}\cup_{\Sigma} U_{\beta}$, with $w$ denoting a basepoint away from the attaching curves. Heegaard Floer homology has several flavors, which are given by suitable versions of the Lagrangian Floer homology of $(\Sym^g(\Sigma), \T_{\alphas}, \T_{\betas})$, where $\Sym^g(\Sigma)$ denotes the symmetric product of the surface together with a symplectic form which is a product form away from the diagonal, and $\T_{\alphas}$ and $\T_{\betas}$ are the tori formed by taking the products of the two sets of attaching curves. In particular, $\HFhat(Y)$ is given by $\HF(\Sym^g(\Sigma - \{w\}), \T_{\alphas}, \T_{\betas})$. The construction splits along the set of $\spin^c$ structures on $Y$, so that $\HFhat(Y) = \oplus_{\mathfrak s \in \Spin^c(Y)} \HFhat(Y, \mathfrak s)$. Given a real three-manifold $(Y,\tau)$, Guth and Manolescu \cite[Proposition 3.2]{GM:real} use work of Nagase \cite[Proposition 2.4]{Nagase} to show there exists a Heegaard diagram $H = (\Sigma, \alphas, \betas, w)$ for $Y$ which is symmetric with respect to the action in the sense that that $\tau$ restricts to an orientation-reversing action $R$ on $\Sigma$ exchanging $\alpha$ curves with $\beta$ curves and fixing the basepoint. This means that $\tau$ exchanges the two handlebodies in the decomposition $Y = U_{\alpha} \cup_{\Sigma} U_{\beta}$ determined by the Heegaard diagram. They call this data $(H,R)$ a real Heegaard diagram for $(Y, \tau)$. Under these circumstances the induced map, also called $R$, on $\HF(\Sym^g(\Sigma), \T_{\alphas}, \T_{\betas})$ is anti-symplectic on the symmetric product and exchanges the Lagrangians. One may therefore take
\[ \HFRhat(Y, \tau) := HF(\Sym^g(\Sigma - \{w\}), \T_{\alphas}, \Sym^g(\Sigma - \{w\})^R)\]
and similarly for other variants. The construction splits along $\spin^c$ structures and further splits over real $\spin^c$ structures $\mathfrak s_r$, which we discuss in Section~\ref{sec:realhf}.

As detailed above, the case of real Lagrangian and Heegaard Floer theory deals with an anti-symplectic involution exchanging the Lagrangians. Let us contrast it to the situation of a symplectic involution $S$ on $(M, \omega)$ which fixes each of a pair of Lagrangians $L_0$ and $L_1$ setwise, which has been extensively studied in Lagrangian and Heegaard Floer homology. In this situation the fixed sets $(M^S, L_0^S, L_1^S)$, if connected, are a new triple consisting of a symplectic manifold with Lagrangians, such that one may consider $\HF(M^S, L_0^S, L_1^S)$. A major technical tool in this situation is the localization theorem of Seidel-Smith \cite{SS10} and Large \cite{Large}, which states that under nontrivial K-theoretic hypotheses, detailed in Section~\ref{sec:Large}, one has a spectral sequence
\[\HF(M, L_0, L_1) \otimes \bF[\theta, \theta^{-1}] \rightrightarrows \HF(M^S, L_0^S, L_1^S) \otimes \bF[\theta, \theta^{-1}]. \]
Here and throughout, $\bF$ is the field of two elements and $\theta$ has degree one, so that $H^*(B \bZ/2\bZ; \bF) \simeq \bF[\theta]$. It follows that there is a dimension inequality between the Floer homology of $(M, L_0, L_1)$ and $(M^S, L_0^S, L_1^S)$; this is the analog of the classical theorem of P. A. Smith for topological actions \cite{Smith:dimension}, as reformulated by Borel \cite{Borel:nouvelle}. This theorem has been applied broadly to Heegaard Floer theory via the following basic strategy: for some pair $(Y,L)$ consisting of a 3-manifold $Y$ with a possibly empty link $L$ and an orientaton-preserving action $\tau$ on $(Y,L)$, one constructs a Heegaard diagram for $(Y,L)$ such that the restriction of $\tau$ to the Heegaard surface $\Sigma$ is orientation-preserving and $\tau$ exchanges $\alpha$ curves with $\alpha$ curves and $\beta$ curves with $\beta$ curves, so that $\tau$ preserves each of $U_{\alpha}$ and $U_{\beta}$. One obtains the symplectic situation described above; that is, the induced map on $(\Sym^g(S), \T_{\alphas}, \T_{\betas})$ is symplectic on $\Sym^g(S)$ and preserves each of $\T_{\alphas}$ and $\T_{\betas}$ setwise. The first application of this, originally proved by the author \cite{Hendricks12:dcov-localization} and refined by the author  with Lipshitz and Sarkar \cite{HLS-equivariant}, was a spectral sequence
\begin{equation} \label{eqn:original}\HFKhat(\Sigma(K), \widetilde{K}, \mathfrak s_0) \otimes \bF[\theta, \theta^{-1}] \rightrightarrows \HFKhat(S^3,K).\end{equation}
Here $\Sigma(K)$ is the branched double cover of a knot $K$ in $S^3$, the knot $\widetilde{K}$ is the lift of $K$ in the branched double cover, $\mathfrak s_0$ is the unique spinnable $\spin^c$ structure on the branched double cover, and $\HFKhat(Y,K)$ is the knot version of Heegaard Floer homology for a nullhomologous knot in a 3-manifold $Y$, due to Ozsv{\'a}th-Szab{\'o} \cite{OSKnots, OSLinks} and J. Rasmussen \cite{RasmussenKnots}. Similarly, the author, Lipshitz, and Lidman \cite{HLL:branched} showed that if $L \subset Y$ is an $\ell$-component $\mathbb Z/2\mathbb Z$-nullhomologous link in $Y$ a three-manifold, $\Sigma(Y,L) \xrightarrow{\pi} Y$ is a branched cover of $(Y,L)$, and $\mathfrak s$ is a $\spin^c$ structure on $\Sigma(Y,L)$ which is the pullback $\pi^{*}\mathfrak s'$ of some $\spin^c$ structure $\mathfrak s'$ on $Y$, there is a similarly-constructed spectral sequence
\begin{equation}\label{eqn:original-manifolds} \HFhat(\Sigma(Y,L), \mathfrak s) \otimes H^*((S^1)^{\ell -1}) \otimes  \bF[\theta, \theta^{-1}] \rightrightarrows \bigoplus_{\substack{\mathfrak s'\in \Spin^c(Y) \\ \pi^* \mathfrak s' = \mathfrak s}} \HFhat(Y, \mathfrak s') \otimes H^*((S^1)^{\ell -1}) \otimes \bF[\theta, \theta^{-1}].\end{equation}
Here the sum on the right-hand side is taken over all $\spin^c$ structures on $Y$ whose pullback to $\Sigma(Y,L)$ is $\mathfrak s$. In particular, this implies that if $\Sigma(Y,L)$ is a Heegaard Floer $L$-space, which is to say a rational homology sphere with $\dim (\HFhat(Y, \mathfrak s)) = 1$ in each $\spin^c$ structure $\mathfrak s$, then so is $Y$ \cite[Corollary 1.2]{HLL:branched}.

In Remark 6.6 of their paper \cite{GM:real}, Guth and Manolescu conjecture that there is a localization spectral sequence for the hat variant of their real Heegaard Floer homology. To wit, if $\tau$ is the covering involution on $\Sigma(K)$, they expect that for each $\spin^c$ structure $\mathfrak s \in \Spin^c(\Sigma(K))$ there is a spectral sequence
\[ \HFhat(\Sigma(K), \mathfrak s) \otimes \bF[\theta, \theta^{-1}] \rightrightarrows \widehat{HFR}(\Sigma(K), \mathfrak s, \tau) \otimes \bF[\theta, \theta^{-1}]. \]
For branched double covers of knots, every $\spin^c$ structure $\mathfrak s$ has a unique associated real $\spin^c$ structure, so the right-hand side could equivalently be phrased with $\mathfrak s_r$ in place of $\mathfrak s$. In this note, we supply a proof of their conjecture, via showing the following.

\begin{thm} \label{thm:main} Let $(M, L_0, L_1)$ be a triple such that $M$ is an exact symplectic manifold which has the structure of a symplectization near infinity, and $L_0$ and $L_1$ are compact exact Lagrangians. Suppose that $R$ is an anti-symplectic involution on $M$ which interchanges $L_0$ and $L_1$. There is a spectral sequence whose $E_1$ page is
\[HF(M,L_0,L_1) \otimes \bF[\theta, \theta^{-1}]\]
and whose $E_{\infty}$ page is isomorphic to
\[ \HFR(M, L_0, L_1) \otimes \bF[\theta, \theta^{-1}].\]
\end{thm}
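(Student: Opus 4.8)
The plan is to reduce the theorem to the Seidel--Smith--Large localization machinery by replacing the anti-symplectic involution exchanging two Lagrangians with a genuine symplectic involution fixing a pair of Lagrangians, on an auxiliary symplectic manifold. The key observation is that $\HFR(M,L_0,L_1)$ is by definition $\HF(M,L_0,M^R)$, while the ordinary group $\HF(M,L_0,L_1)$ should be recovered as the fixed-point Floer homology of some $\bZ/2\bZ$-action. To engineer this, I would work not on $M$ but on the product $M \times \bar M$ (with $\bar M$ denoting $M$ with symplectic form $-\omega$), or, more efficiently, mimic the standard trick used for Heegaard Floer localization: pass to $M \times M$ equipped with the symplectic involution $\sigma(x,y) = (R(y), R(x))$. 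This $\sigma$ is genuinely symplectic (a composition of the anti-symplectic $R$ in each factor with the swap), its fixed point set is the ``real diagonal'' $\{(x, R(x))\} \cong M$, and it preserves setwise the Lagrangians $L_0 \times L_1$ and $L_1 \times L_0$ since $R$ interchanges $L_0$ and $L_1$. One then has $\HF(M\times M, L_0 \times L_1, L_1 \times L_0) \cong \HF(M, L_0, L_1) \otimes \HF(M, L_1, L_0)$ by a K\"unneth formula, and the fixed-point data $(M^\sigma, (L_0\times L_1)^\sigma, (L_1 \times L_0)^\sigma)$ computes, after identifying $M^\sigma$ with $M$ via the first projection, the group $\HF(M, L_0 \cap M^R \text{-type data})$ — I would need to check that this fixed-point Floer homology is precisely $\HFR(M,L_0,L_1)$ or its ``square,'' possibly after a small Hamiltonian perturbation to achieve transversality of the fixed loci.

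The main steps, in order, would be: (i) set up the auxiliary manifold $(M\times M, \omega\oplus\omega)$ with the symplectic involution $\sigma$ and verify it satisfies the exactness, symplectization-near-infinity, and compact exact Lagrangian hypotheses required as input to the localization theorem of Seidel--Smith and Large recalled in Section~\ref{sec:Large} — exactness and the conical structure pass to products straightforwardly, and $\sigma$ preserves the Liouville form; (ii) verify the stable-normal-triviality / K-theoretic hypothesis of Section~\ref{sec:Large} for this $\sigma$-action, which amounts to understanding the $\bZ/2\bZ$-equivariant normal bundle of $M^\sigma$ inside $M\times M$ — here the normal bundle is (non-equivariantly) $TM$ itself, with $\bZ/2\bZ$ acting by $-dR$, and one must produce the required stable equivariant trivialization of its complexification; (iii) apply the localization spectral sequence to obtain $\HF(M\times M, L_0\times L_1, L_1\times L_0)\otimes \bF[\theta,\theta^{-1}] \rightrightarrows \HF(M^\sigma, (L_0\times L_1)^\sigma, (L_1\times L_0)^\sigma)\otimes\bF[\theta,\theta^{-1}]$; (iv) identify the $E_1$ page via K\"unneth with $\HF(M,L_0,L_1)^{\otimes 2}\otimes\bF[\theta,\theta^{-1}]$ and the $E_\infty$ page with $\HFR(M,L_0,L_1)^{\otimes 2}\otimes\bF[\theta,\theta^{-1}]$, using that the fixed triple is a diagonal copy of $(M, L_0, M^R)$; and finally (v) extract a ``square root'' of this spectral sequence to get the stated one with a single tensor factor on each page, either by an algebraic argument over the field $\bF$ exploiting the multiplicative structure, or — more robustly — by instead running the argument directly on $M$ with a suitable perturbation so that no doubling is needed.

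I expect step (ii), the K-theoretic/stable-triviality hypothesis, to be the main obstacle, exactly as it is in all applications of Seidel--Smith localization: one must exhibit the complexified $\bZ/2\bZ$-equivariant normal bundle of the real diagonal as stably equivariantly trivial, and while this is automatic for Heegaard-Floer-type symmetric products (where the relevant bundles are built from $TS$ and the swap), in the abstract exact setting it is a genuine constraint that may require either an additional hypothesis or an argument specific to the ``$\sigma = R\text{-twisted swap}$'' structure — note $(TM, -dR)$ as a $\bZ/2\bZ$-bundle, after complexification, is $TM\otimes\bC$ with the involution $-dR\otimes 1$, and its stable triviality should follow from the fact that $M^R$ being Lagrangian forces $dR$ to be conjugate to $\mathrm{diag}(1,-1)$ fiberwise along $M^R$, extended over $M$ using exactness; making this precise and global is the crux. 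The secondary difficulty is step (v): passing from the squared statement to the single statement is not purely formal since a tensor-square of graded vector spaces does not determine the original, so I would likely avoid the product trick entirely in the final write-up and instead set up the localization directly, treating $M^R \subset M$ as the fixed locus of the symplectic involution obtained by composing $R$ with an appropriate reflection in a collar — but the product formulation is the cleanest route to see why the theorem should be true.
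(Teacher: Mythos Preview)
Your overall strategy---pass to a product, turn the anti-symplectic $R$ into a symplectic involution, and apply Large's theorem---is the right one, but your specific choices derail it in two ways.

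First, on $(M\times M,\omega\oplus\omega)$ the map $\sigma(x,y)=(R(y),R(x))$ is \emph{anti}-symplectic, not symplectic: the swap preserves $\omega\oplus\omega$ and $R\times R$ negates it, so their composite negates it. You need $M\times M^-=(M\times M,\omega\oplus(-\omega))$, which you mention but set aside; there $\sigma$ is genuinely symplectic, and its fixed set $\{(x,R(x))\}$ is a symplectic (rather than isotropic) copy of $M$.

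Second, and fatally, your pair of Lagrangians $L_0\times L_1$ and $L_1\times L_0$ never sees $M^R$. Under the identification $(x,R(x))\mapsto x$ of the fixed manifold with $M$, the fixed Lagrangians $(L_0\times L_1)^\sigma$ and $(L_1\times L_0)^\sigma$ project to $L_0$ and $L_1$ respectively, so the $E_\infty$ page of your spectral sequence would be $\HF(M,L_0,L_1)$ again, not $\HFR(M,L_0,L_1)=\HF(M,L_0,M^R)$. The missing idea is to take the \emph{diagonal} $\Delta\subset M\times M^-$ as the second Lagrangian. Then $\HF(M\times M^-,\,L_0\times L_1,\,\Delta)\simeq\HF(M,L_0,L_1)$ directly by folding strips---no K\"unneth, no squaring, no step~(v)---and $\Delta^\sigma=\{(x,x):R(x)=x\}$ projects exactly to $M^R$, so the fixed triple is $(M,L_0,M^R)$ and $E_\infty$ is $\HFR(M,L_0,L_1)$ on the nose. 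With this setup the stable tangent-normal isomorphism you flag as the main obstacle is in fact a short explicit check: for any compatible $J$ with $JR_*=-R_*J$, the map $(v,R_*v)\mapsto(Jv,JR_*v)$ is a complex-linear isomorphism from the tangent to the normal bundle of the fixed set, carrying $T\Delta^\sigma$ to $N\Delta^\sigma$ exactly and $T(L_0\times L_1)^\sigma$ to $\tilde J\cdot N(L_0\times L_1)^\sigma$, which is Lagrangian-homotopic to $N(L_0\times L_1)^\sigma$. No additional hypothesis is required.
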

The strategy of the proof is to replace the anti-symplectic involution by a symplectic involution to which the localization theorem of Seidel-Smith and Large may be applied. Specifically, we replace $(M, \omega)$ with $(M \times M^-, \omega\oplus -\omega)$. Allowing $R$ to interchange the factors of the product gives a symplectic involution on $M \times M^-$. For our two Lagrangians we consider the product $L_0 \times L_1$ and the diagonal $\Delta$, each of which is preserved setwise by $S$. Standard Lagrangian Floer theory shows that \[\HF(M, L_0, L_1) \simeq \HF(M \times M^-, L_0\times L_1, \Delta).\] The fixed sets $(M^S, \Delta^S, (L_0 \times L_1)^S)$ are embedded copies of $(M, L_0, M^R)$, so that \[\HF(M^S, \Delta^S, (L_0 \times L_1)^S) \simeq \HFR(M, L_0, L_1).\] After checking the technical hypotheses of Large's version of the localization theorem the result follows. The details are carried out in Section~\ref{sec:proof}.

This theorem then has the following special case in Heegaard Floer theory. In the statement that follows, the map $\tau_*$ is the action on $\HFhat(Y)$ associated to the diffeomorphism $\tau$ on Heegaard Floer homology as studied by Juh{\'a}sz-Thurston-Zemke \cite{JTNaturality}. Recall that if $Y$ is a rational homology sphere, this map is well-defined; if $Y$ is not, it is necessary to fix a basepoint $w$, which we always choose to be on the fixed set of $\tau$. We include $w$ in the notation where its presence is relevant. The map $\iota$ is the conjugation involution on Heegaard Floer homology studied by the author and Manolescu \cite{HMInvolutive}. The constructions of these maps are reviewed in Section~\ref{sec:actions}.

\begin{thm} \label{thm:branched} Let $Y$ be a 3-manifold admitting an involution $\tau$ whose fixed set has codimension two together with $w$ a basepoint on the fixed set, and let $\mathfrak s \in \Spin^c(Y)$ be a $spin^c$ structure on $Y$. There is a spectral sequence whose $E_1$ page is isomorphic to 
\[ \HFhat(Y,w, \mathfrak s) \otimes \bF[\theta, \theta^{-1}] \]
and whose $E_{\infty}$ page is isomorphic to
\[ \bigoplus_{\mathfrak s_r \in \mathfrak s}\HFRhat(Y, \mathfrak s_r, \tau) \otimes \bF[\theta, \theta^{-1}].\]
Here the sum $\bigoplus_{\mathfrak s_r \in \mathfrak s}$ is taken over the set of real $\spin^c$ structures associated to $\mathfrak s$. There is therefore a dimension inequality
\[ \dim \HFhat(Y,w, \mathfrak s)  \geq \dim \bigoplus_{\mathfrak s_r \in \mathfrak s}\HFRhat(Y, \mathfrak s_r, \tau).\]
Furthermore if $(Y,\tau)$ admits a real Heegaard diagram $H = (\Sigma, \alphas, \betas, w)$ with the property that there is an $R$-symmetric family of almost complex structures $J$ on $\Sigma$ such that $\Sym^g(J)$ achieves transversality for $(\Sym^g(\Sigma -\{w\}), \T_{\alphas}, \T_{\betas})$, then the $d_1$ differential of this spectral sequence is
\[ (1+\iota_* \tau_*)\theta.\]
In particular, if $\Sigma(K)$ is the branched double cover of a knot $K$ in the three-sphere, let $\mathfrak s \in \Spin^c(\Sigma(K))$ be a $\spin^c$ structure on $\Sigma(K)$ and $\mathfrak s_r$ be its unique associated real $\spin^c$ structure. There is a spectral sequence whose $E_1$ page is isomorphic to
\[ \HFhat(\Sigma(K), \mathfrak s) \otimes \bF[\theta, \theta^{-1}]\]
and whose $E_{\infty}$ page is isomorphic to 
\[ \HFRhat(\Sigma(K), \mathfrak s_r, \tau) \otimes \bF[\theta, \theta^{-1}]. \]
\end{thm}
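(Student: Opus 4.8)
The plan is to deduce this statement from Theorem~\ref{thm:main} by setting up the Heegaard-Floer-theoretic objects so that the hypotheses of Theorem~\ref{thm:main} apply. First I would invoke the result of Guth--Manolescu \cite[Proposition 3.2]{GM:real}, built on Nagase's work \cite{Nagase}, to produce a real Heegaard diagram $H = (\Sigma, \alphas, \betas, w)$ for $(Y,\tau)$ with basepoint $w$ on the fixed set; this realizes $\tau$ as an orientation-reversing involution $R$ of $\Sigma$ exchanging the $\alpha$- and $\beta$-curves and fixing $w$. Passing to the symmetric product $\Sym^g(\Sigma - \{w\})$, one obtains an exact symplectic manifold with the structure of a symplectization near infinity (this is standard in the Heegaard Floer setting, cf. \cite{OSDisks} and the exactness discussion in \cite{GM:real}), the compact exact Lagrangian tori $\T_{\alphas}$ and $\T_{\betas} = R(\T_{\alphas})$, and the induced anti-symplectic involution, still called $R$, interchanging them. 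Theorem~\ref{thm:main} then yields, for the total homology, a spectral sequence from $\HFhat(Y,w) \otimes \bF[\theta,\theta^{-1}]$ converging to $\HFRhat(Y,\tau) \otimes \bF[\theta,\theta^{-1}]$.

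Next I would refine this along $\spin^c$ and real $\spin^c$ structures. The localization/doubling construction underlying Theorem~\ref{thm:main} is compatible with the decomposition of intersection points by homotopy classes of Whitney disks, hence with the $\spin^c$ splitting on the $M \times M^-$ side; one checks that the fixed-point side of the doubling, which computes $\HFRhat$, splits precisely over the real $\spin^c$ structures $\mathfrak s_r$ refining a given $\mathfrak s$, as discussed in Section~\ref{sec:realhf}. Restricting the whole construction to a single $\spin^c$ class $\mathfrak s$ on $Y$ gives the stated spectral sequence with $E_1 \cong \HFhat(Y,w,\mathfrak s)\otimes\bF[\theta,\theta^{-1}]$ and $E_\infty \cong \bigoplus_{\mathfrak s_r \in \mathfrak s}\HFRhat(Y,\mathfrak s_r,\tau)\otimes\bF[\theta,\theta^{-1}]$. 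For the branched double cover $\Sigma(K)$ the covering involution $\tau$ has the property that each $\spin^c$ structure carries a unique real $\spin^c$ structure, so the sum on the right collapses to a single term $\HFRhat(\Sigma(K),\mathfrak s_r,\tau)\otimes\bF[\theta,\theta^{-1}]$; one must also note here that $\tau$ has $2$-dimensional fixed set (the branch locus $\widetilde K$), so the hypotheses are met, and $\Sigma(K)$ is a rational homology sphere so no basepoint subtlety arises.

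The identification of the $d_1$ differential is where the real work lies, and I expect it to be the main obstacle. Under the additional transversality hypothesis — an $R$-symmetric family $J$ on $\Sigma$ with $\Sym^g(J)$ regular for $(\Sym^g(\Sigma-\{w\}),\T_{\alphas},\T_{\betas})$ — the $E_1$ page is computed by an honest (non-perturbed) complex, and the first differential of a Seidel--Smith/Large-type localization spectral sequence is given by $(1 + \Phi)\theta$ where $\Phi$ is the chain-level action of the involution on the Floer complex. Here that involution is $R$ acting on $\CFhat(\Sym^g(\Sigma-\{w\}),\T_{\alphas},\T_{\betas})$, which after passing through the doubling isomorphism $\HF(M,L_0,L_1)\simeq \HF(M\times M^-, L_0\times L_1,\Delta)$ must be reinterpreted on $\HFhat(Y)$. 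The claim is that this reinterpretation is exactly the composite $\iota_*\tau_*$ of the conjugation involution of \cite{HMInvolutive} with the diffeomorphism action of \cite{JTNaturality}, reviewed in Section~\ref{sec:actions}: the $\tau_*$ factor comes from $R$ permuting $\alpha$- and $\beta$-curves (i.e. the diffeomorphism of the Heegaard diagram), while the $\iota_*$ factor arises because swapping $\T_{\alphas}$ and $\T_{\betas}$ as Lagrangians reverses the roles of the handlebodies, which is precisely the conjugation symmetry $\HF(\Sym^g,\T_{\alphas},\T_{\betas})\cong \HF(\Sym^g,\T_{\betas},\T_{\alphas})$ defining $\iota$. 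Making this precise requires carefully tracking basepoints and orientations through the naturality machinery and checking that no sign or ordering discrepancy obstructs the identification $\Phi = \iota_*\tau_*$ at the chain level; this bookkeeping, rather than any new geometric input, is the delicate part.
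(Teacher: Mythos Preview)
Your proposal is correct and follows essentially the same route as the paper. The two places you leave as ``bookkeeping'' are exactly where the paper inserts short lemmas: the $\spin^c$ splitting uses the observation (Lemma~\ref{lem:spincsame}) that for a real diagram $\mathfrak s_w(R(\mathbf x)) = \mathfrak s_w(\mathbf x)$ when both generators are viewed in $\CFhat(H)$, so $S$ preserves path components; and the $d_1$ identification rests on the fact that under the $R$-symmetric transversality hypothesis one has $\tau(\cH) = \overline{\cH}$ on the nose, so the two Juh\'asz--Thurston--Zemke naturality maps appearing in $\iota\circ\tau$ compose to the identity and one is left with $\eta\circ\eta_\tau$, which is literally $\mathbf x \mapsto R(\mathbf x)$ (Lemma~\ref{lem:iotatau}).
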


Let us remark on the condition associated to the computation of the $d_1$ differential. Guth and Manolescu show that not every real Heegaard diagram admits an $R$-symmetric family of almost complex structures achieving transversality \cite[Remark 2.4 and Example 4.7]{GM:real}. However, this does not rule out the possibility that every three-manifold possesses some real Heegaard diagram satisfying the condition above; for example, in the analogous case of symplectic actions, the arguments of \cite[Section 5.2]{HLS-equivariant} show this is satisfied by equivariant diagrams which are nice in the sense of Sarkar and Wang \cite{SW:nice}. We expect that in order to show the computation of the $d_1$ differential above holds for all three-manifolds, it suffices to prove that a suitable version of the Sarkar-Wang algorithm can be run symmetrically on a real Heegaard diagram, and then to use the resulting real nice diagrams to carry out an index computation of a similar flavor to \cite[Section 5.2]{HLS-equivariant}.

We now discuss how the spectral sequence for the branched double cover of Theorem~\ref{thm:branched} differs from \eqref{eqn:original} and \eqref{eqn:original-manifolds}. If $Y$ is a real 3-manifold, the induced isomorphism $\tau_*$ sends $\HFhat(Y, w, \mathfrak s) \rightarrow \HFhat (Y, w, \overline{\mathfrak s})$, where $\overline{\mathfrak s}$ is the conjugate $\spin^c$ structure to $\mathfrak s$. In the spectral sequence \eqref{eqn:original-manifolds}, the $d_1$ differential is $(1+\tau_*)\theta$, and similarly for \eqref{eqn:original} with the action induced by $\tau$ on the knot Floer homology. Therefore the Heegaard Floer homology in $\spin^c$ structures which are not conjugation invariant cannot survive to the $E_2$ page of either spectral sequence, and we obtain interesting spectral sequences from only the conjugation-invariant $\spin^c$ structures. For branched double covers of knots, there is only one conjugation-invariant $\spin^c$ structure, namely $\mathfrak s_0$, resulting in the form of the spectral sequence \eqref{eqn:original} seen.

This cancellation does not occur in the spectral sequence of Theorem~\ref{thm:branched}, where the $d_1$ differential fixes the $\spin^c$ structure. Indeed, this spectral sequence splits along $\spin^c$ structures and is interesting for any $\spin^c$ structure $\mathfrak s$. This tallies with the computation of the $d_1$ differential: the map $\iota$ also conjugates the $\spin^c$ structure, so that in total $\iota_*\tau_*$ preserves $\spin^c$ structures.

As Guth and Manolescu point out, Theorem~\ref{thm:branched} has the following corollary for branched double covers which are Heegaard Floer $L$-spaces. Recall that by taking the Euler characteristic of real Heegaard Floer homology in the branched double cover, they define a new ``Miyazawa-type'' knot invariant $\chi_{\mathfrak s}(K) = \chi(\HFRhat(\Sigma(K), \mathfrak s, \tau)).$

\begin{cor} \label{cor:lspace} Let $\Sigma(K)$ be an $L$-space. Then for every $\spin^c$ structure $\mathfrak s$ on $\Sigma(K)$, 
\[ \dim\left(\HFRhat(\Sigma(K), \mathfrak s, \tau)\right) = 1.\]
It follows that $\chi_{\mathfrak s}(K) = \pm 1$ for all $\mathfrak s \in \Spin^c(\Sigma(K))$. \end{cor}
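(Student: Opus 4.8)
The plan is to read this off directly from Theorem~\ref{thm:branched}, using the elementary fact that a graded free module of rank one over the graded field $\bF[\theta,\theta^{-1}]$ supports no nonzero homogeneous differential.

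First recall that $\Sigma(K)$ being an $L$-space means precisely that it is a rational homology sphere with $\dim_\bF\HFhat(\Sigma(K),\mathfrak s)=1$ for every $\mathfrak s\in\Spin^c(\Sigma(K))$; in particular $\tau_*$ is well-defined and the basepoint may be suppressed from the notation. Fix such an $\mathfrak s$ and let $\mathfrak s_r$ be its unique associated real $\spin^c$ structure (for branched double covers of knots there is exactly one). By Theorem~\ref{thm:branched} there is a spectral sequence of $\bF[\theta,\theta^{-1}]$-modules with $\bF[\theta,\theta^{-1}]$-linear differentials whose $E_1$ page is $\HFhat(\Sigma(K),\mathfrak s)\otimes\bF[\theta,\theta^{-1}]$ and whose $E_\infty$ page is $\HFRhat(\Sigma(K),\mathfrak s_r,\tau)\otimes\bF[\theta,\theta^{-1}]$. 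Since $\HFhat(\Sigma(K),\mathfrak s)\cong\bF$, the $E_1$ page is graded free of rank one over $\bF[\theta,\theta^{-1}]$.

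Next I would invoke that $\bF[\theta,\theta^{-1}]$ is a graded field: every nonzero homogeneous element is invertible, so any homogeneous $\bF[\theta,\theta^{-1}]$-linear endomorphism of a rank-one graded free module is either zero or an isomorphism. As $d_1^2=0$, the differential $d_1$ cannot be an isomorphism, so $d_1=0$ and $E_2=E_1$; iterating, $d_r=0$ for all $r\ge 1$ and $E_\infty=E_1$ is graded free of rank one. Comparing with the identification of the $E_\infty$ page above forces $\dim_\bF\HFRhat(\Sigma(K),\mathfrak s_r,\tau)=1$, which is the first assertion (and which is phrased with $\mathfrak s$ in place of $\mathfrak s_r$ in the statement because the two carry the same information here). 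When the transversality hypothesis of Theorem~\ref{thm:branched} holds one can alternatively see $d_1=0$ from the formula $d_1=(1+\iota_*\tau_*)\theta$, since the only $\bF$-linear automorphism of the one-dimensional space $\HFhat(\Sigma(K),\mathfrak s)$ is the identity; but the rank argument is uniform in $r$ and requires no such hypothesis.

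Finally, a one-dimensional $\bF$-vector space is concentrated in a single (relative) grading, so $\chi_{\mathfrak s}(K)=\chi\big(\HFRhat(\Sigma(K),\mathfrak s,\tau)\big)=\pm 1$, the sign recording the parity of that grading. I do not anticipate any serious obstacle: the only point that deserves care is the bookkeeping that the localization spectral sequence of Section~\ref{sec:proof} genuinely consists of $\bF[\theta,\theta^{-1}]$-modules with $\bF[\theta,\theta^{-1}]$-equivariant differentials, which is exactly the structure that makes the graded-field argument go through, and which is built into its construction.
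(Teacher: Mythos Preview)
Your argument is correct and follows the same line as the paper's: both deduce the result directly from the spectral sequence of Theorem~\ref{thm:branched} applied to a single $\spin^c$ structure on an $L$-space. The paper's proof simply invokes the rank inequality coming from the localization spectral sequence and stops there; you unpack this by using the $\bF[\theta,\theta^{-1}]$-linearity of the differentials to see that a rank-one $E_1$ page forces every $d_r$ to vanish, which makes the equality $\dim\HFRhat=1$ (rather than just $\leq 1$) explicit. That extra care is a virtue---the bare rank inequality only gives the upper bound, and it is precisely the $\bF[\theta,\theta^{-1}]$-module (equivalently, parity) argument you give that rules out $\dim\HFRhat=0$.
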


The initial constructions of \cite{GM:real} are solely for involutions $\tau$ on $3$-manifolds $Y$. However, Theorem~\ref{thm:main} also has applications to the case of an action $\tau$ on $(Y,K)$ where $K$ is a nullhomologous knot in $Y$. As previously, one requires that the fixed set of $\tau$ on $Y$ have codimension two. We will consider the case of a  knot in the three-sphere which is \emph{strongly invertible}; that is, there is an involution on $(S^3,K)$ which is orientation-preserving on $S^3$ and orientation-reversing on $K$, so that the fixed set of the involution is an unknotted axis intersecting $K$ in two points. The knot therefore divides the axis into two halves. A choice of half-axis and an orientation for this half-axis is a \emph{direction} \cite[Section 1]{Sakumainvertible}. After making this choice we obtain a directed strongly invertible knot.

Recall that the knot Floer homology of a nullhomologous knot $K$ in a three manifold $Y$ is constructed from a Heegaard diagram $H = (\Sigma, \alphas, \betas, w, z)$ such that $(\Sigma, \alphas, \betas, w)$ is a Heegaard diagram for $Y$ and $z$ is placed such that connecting $w$ to $z$ in $U_{\alpha}$ and $z$ to $w$ in $U_{\beta}$ recovers the knot $K$. The hat variant of the knot Floer homology is then $\HFKhat(Y,K) = HF(\Sym^g(\Sigma - \{z,w\}), \T_{\alphas}, \T_{\betas})$. The result splits along an integral Alexander grading, or equivalently relative $\spin^c$ structure, so that $\HFKhat(S^3,K) \simeq \bigoplus_{i \in \mathbb Z} \HFKhat(S^3,K,i)$. 

Following the same recipe as above, suppose one has a Heegaard diagram $H = (\Sigma, \alphas, \betas, w, z)$ for a strongly invertible knot $K$ which is symmetric in the sense that  $\tau|_{\Sigma}=R$ is orientation-reversing, fixes $z$ and $w$, and exchanges $\alpha$ curves with $\beta$ curves. If $K$ is directed we additionally insist that $z$ be the starting point of the oriented half-axis and $w$ be the endpoint. One can then define an associated real knot Floer homology, to wit
\[ \HFKR(H) := \HFR( \Sym^g(\Sigma -\{w,z\}), \T_{\alphas}, \T_{\betas}).\]
The result splits as $\HFKR(H) \simeq \bigoplus_{i \in \mathbb Z} \HFKR(H,i)$ along Alexander gradings; we do not attempt to consider real Alexander gradings in the present work. One expects the arguments of \cite[Section 3]{GM:real} may be adapted to show that this theory is an invariant of the directed strongly invertible knot, although given the limited focus of this note we do not attempt a full argument here. However, in Section~\ref{sec:examples-knots} we construct doubly-pointed real Heegaard diagrams for directed strongly invertible knots $K$, and show the existence of the following spectral sequence.

\begin{prop}\label{prop:si} Let $K \subset S^3$ be a directed strongly invertible knot. There is a spectral sequence whose $E_1$ page is isomorphic to
\[ \HFKhat(S^3, K, i) \otimes \bF[\theta, \theta^{-1}] \]
and whose $E_{\infty}$ page is isomorphic to
\[ \HFKR (H,i) \otimes \bF[\theta, \theta^{-1}]\]
where $H$ is any real doubly-pointed Heegaard diagram for the directed strongly invertible knot. There is therefore a dimension inequality
\[\dim \HFKhat(S^3, K, i) \geq \HFKR (H,i).\]
Moreover if the directed strongly invertible knot admits a real Heegaard diagram $H = (\Sigma, \T_{\alphas}, \T_{\betas}, w,z)$ with the property that there is an $R$-symmetric family of almost complex structures $J$ on $\Sigma$ such that $\Sym^g(J)$ achieves transversality for the triple $(\Sym^g(\Sigma -\{w,z\}), \T_{\alphas}, \T_{\betas})$, then the $d_1$ differential of this spectral sequence is
\[ (1 + (\iota_K \tau_K)_*) \theta.\]
\end{prop}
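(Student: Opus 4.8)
The plan is to obtain Proposition~\ref{prop:si} from Theorem~\ref{thm:main} by exactly the argument that yields Theorem~\ref{thm:branched}, with the symmetric product of the twice-punctured Heegaard surface in place of the once-punctured one. First I would fix a real doubly-pointed Heegaard diagram $H = (\Sigma, \alphas, \betas, w, z)$ for the directed strongly invertible knot $K$; the construction of such diagrams, with $w$ and $z$ on the chosen oriented half-axis and $R = \tau|_{\Sigma}$ orientation-reversing, fixing $w$ and $z$, and exchanging $\alpha$ curves with $\beta$ curves, is carried out in Section~\ref{sec:examples-knots}. Writing $M = \Sym^g(\Sigma - \{w, z\})$, $L_0 = \T_{\alphas}$ and $L_1 = \T_{\betas}$, I would then record that, for a suitable choice of K\"ahler structure as in Section~\ref{sec:realhf} and \cite{GM:real}, $M$ is an exact symplectic manifold with the structure of a symplectization near infinity, $L_0$ and $L_1$ can be taken to be compact exact Lagrangians, and $R$ induces an anti-symplectic involution of $M$, still denoted $R$, which interchanges $L_0$ and $L_1$. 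The only point here not literally contained in the branched-cover case is that the symmetric product of the twice-punctured surface still satisfies the hypotheses of Theorem~\ref{thm:main}; this is routine.

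Applying Theorem~\ref{thm:main} to $(M, L_0, L_1, R)$ then produces a spectral sequence with $E_1$ page $\HF(M, L_0, L_1) \otimes \bF[\theta, \theta^{-1}]$ and $E_{\infty}$ page isomorphic to $\HFR(M, L_0, L_1) \otimes \bF[\theta, \theta^{-1}]$. By the definitions of the hat flavor of knot Floer homology and of $\HFKR(H)$ recalled above, these are $\HFKhat(S^3, K, w, z) \otimes \bF[\theta, \theta^{-1}]$ and $\HFKR(H) \otimes \bF[\theta, \theta^{-1}]$ respectively; the former is independent of $H$ by naturality of knot Floer homology, while the latter is phrased in terms of the chosen diagram $H$ since we do not establish invariance of $\HFKR$ here. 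This gives the first assertion of the proposition for an arbitrary real doubly-pointed diagram.

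For the identification of $d_1$ I would assume $H$ carries an $R$-symmetric family $J$ on $\Sigma$ with $\Sym^g(J)$ regular for $(M, L_0, L_1)$ and follow the proof of Theorem~\ref{thm:main}: pass to $(M \times M^-, L_0 \times L_1, \Delta)$ with the swap involution $S(x, y) = (R(y), R(x))$ and the product almost complex structure built from $\Sym^g(J)$, which is $S$-invariant and, by the transversality hypothesis, regular for both the ambient complex and the fixed locus. The equivariant structure underlying Large's localization spectral sequence can then be taken strictly, so that $d_1 = (1 + \phi_*)\theta$, where $\phi$ is the honest chain endomorphism induced by $S$. It remains to identify $\phi_*$ on $\HFKhat(S^3, K)$. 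Transporting $S$ through the canonical isomorphism $\HF(M \times M^-, L_0 \times L_1, \Delta) \cong \HF(M, L_0, L_1)$, the swap factor becomes the reversal isomorphism $\HF(M, L_0, L_1) \to \HF(M, L_1, L_0)$, which interchanges the roles of the $\alpha$ and $\beta$ tori, and the $R \times R$ factor becomes the action of the orientation-reversing, basepoint-fixing, curve-swapping diffeomorphism $\tau|_{\Sigma}$; their composite is the standard representative of $\iota_K \tau_K$, so $\phi_* = (\iota_K \tau_K)_*$, with $\iota_K$ and $\tau_K$ the conjugation and knot-involution maps recalled in Section~\ref{sec:actions}. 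This yields $d_1 = (1 + (\iota_K \tau_K)_*)\theta$.

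I expect the main obstacle to be this last identification: one must check, basepoint by basepoint and keeping track of the chosen direction, that the chain map induced by $S$ on the doubled complex agrees with the standard model for $\iota_K \tau_K$, and that the transversality hypothesis is precisely what promotes the a priori homotopy-coherent equivariant structure of Large's construction to the strict form giving $d_1 = (1 + \phi_*)\theta$. Both steps parallel the corresponding arguments for Theorem~\ref{thm:branched} and the index computations of \cite[Section 5.2]{HLS-equivariant}, so the difficulty is bookkeeping rather than a new idea; the one genuinely knot-theoretic subtlety is confirming that placing $w$ and $z$ on a single oriented half-axis remains compatible with the $R$-symmetry of the diagram throughout.
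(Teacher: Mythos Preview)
Your proposal is correct and follows essentially the same route as the paper: apply Theorem~\ref{thm:main} to the twice-punctured symmetric product, then in the presence of an $R$-symmetric regular $J$ identify the induced chain involution with the standard model for $\iota_K\tau_K$. The paper makes this last identification precise by recognizing the action of $S$ on generators as the composition $\eta_K \circ \eta_{\tau} \circ sw$ (note the extra basepoint-swap $sw$, which your informal ``swap factor plus $R\times R$ factor'' decomposition omits) and then invoking Lemma~\ref{lem:iotaKtauK}; your final paragraph correctly anticipates that this bookkeeping is where the care is needed.
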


Analogously to the previous statement, $\iota_K$ denotes the involutive knot Floer conjugation map of the author and Manolescu \cite{HMInvolutive}, and $\tau_K$ denotes the action of the knot symmetry as constructed by Mallick \cite{Mallick:surgery} and Dai, Mallick, and Stoffregen \cite{DMS:equivariant} on the knot Floer homology; we review these maps in Section~\ref{sec:actions}. The role of the direction in the statement is as follows. Each of the maps $\iota_K$ and $\tau_K$ is well-defined up to chain homotopy equivalence, but they are not simultaneously well-defined up to chain homotopy equivalence without a choice of direction \cite[Section 3.5]{DMS:equivariant}. In general, reversing the direction corresponds to postcomposing (``twisting'') one of $\tau_K$ or $\iota_K$ by the Sarkar basepoint-moving involution \cite{SarkarMovingBasepoints, ZemQuasi}; the two possible postcompositions produce chain homotopy equivalent triples \cite[Lemma 2.21]{DMS:equivariant}.

We observe some examples of the spectral sequence of Proposition~\ref{prop:si} for simple cases.

\begin{example} Recall that an $L$-space knot is a knot which admits a positive surgery to a Heegaard Floer $L$-space. If $K$ is an $L$-space knot, then $\dim(\HFKhat(S^3, K, i)) \leq 1$ for all $i$ \cite{OSlens}. Therefore if $K$ is a strongly invertible $L$-space knot, such as for example a torus knot with its unique strong inversion, the spectral sequence of Proposition~\ref{prop:si} collapses on the $E_1$ page. Indeed, if $H$ is a real Heegaard diagram for $K$, we conclude that in each Alexander grading $\HFKR(H, i) \simeq \HFKhat(S^3,K,i)$, ignoring the homological grading, for any real Heegaard diagram $H$ for $(S^3,K)$. 
\end{example}

\begin{example} Recall that a thin knot $K$ is one for which the difference between the Alexander grading and homological grading of $\HFKhat(S^3,K)$ is a constant; in particular, such that in each Alexander grading the knot Floer homology lies in a single homological grading. The set of thin knots includes the alternating knots. Since the spectral sequence of Proposition~\ref{prop:si} splits along Alexander gradings, it follows that for a strongly invertible thin knot, the only potentially nontrivial differential in the spectral sequence of Proposition~\ref{prop:si} is $d_1$. The spectral sequence therefore collapses on the $E_2$ page. \end{example}

At the close of Section~\ref{sec:examples-knots} we additionally consider some more speculative computations which assume that it is possible to realize the computation of the $d_1$ differential of Proposition~\ref{prop:si}.

\begin{rem} The construction of the spectral sequence of Proposition \ref{prop:si} uses Heegaard diagrams adapted from \emph{transvergent} diagrams for strongly invertible knots; in the language introduced by Boyle and Issa \cite{Boyle:alternating,BI:genera}, a transvergent diagram is one in which the symmetry axis lies in the plane of the page, as in Figure \ref{fig:transvergentdiagrams}. Recently, following the general strategy for Heegaard diagrams with orientation-preserving actions discussed above, Parikh \cite{Parikh} constructed a spectral sequence for strongly invertible knots using Heegaard diagrams adapted from \emph{intravergent} diagrams, which is to say diagrams in which the symmetry axis is perpendicular to the page, of the form:
\[ \HFKhat(S^3,K) \otimes \bF[\theta, \theta^{-1}] \rightrightarrows \HFhat(S^3) \otimes \bF[\theta, \theta^{-1}] \simeq \bF[\theta, \theta^{-1}].\]
Similarly to the case of \eqref{eqn:original} and \eqref{eqn:original-manifolds}, the differential on the $E_1$ page of Parikh's spectral sequence is $(1+(\tau_K)_*)\theta$; in particular, it reverses the sign of the Alexander grading, hence immediately cancelling $\HFKhat(S^3,K,i)$ and $\HFKhat(S^3,K,-i)$ for $i\neq 0$. Parikh's spectral sequence can therefore be taken to start at $\HFKhat(S^3,K,0)$. This is not true of the spectral sequence of Proposition~\ref{prop:si}, the differentials of which fix the Alexander grading.

Separately from the constructions of this paper, one may ask whether it is possible to define a localization spectral sequence from Heegaard diagrams adapted from transvergent diagrams which agrees with Parikh's as far as the $E_1$ page; that is, whose $d_1$ differential is $(1+(\tau_K)_*)\theta$. The reader should compare this to the analogous situation in the setting of Khovanov homology: it was recently shown by Chen and Yang \cite{CY:flip} that in the Khovanov setting the actions arising from transvergent and intravergent diagrams agree up to chain homotopy, leaving open the question of whether the spectral sequences induced by these actions agree. In the setting of Heegaard Floer homology, Juh{\'a}sz-Thurston-Zemke naturality \cite{JTNaturality} as applied by Mallick \cite[Section 6]{Mallick:surgery} implies the actions induced on $\widehat{HFK}$ by $\tau$ are the same for any Heegaard diagram, but this does not itself imply the remaining pages of localization spectral sequences defined from these two constructions of the action must agree. \end{rem}

\begin{rem} We expect that the spectral sequences of Theorem~\ref{thm:branched} and Proposition \ref{prop:si} may be shown to be invariants of the three-manifold and its involution, or of the directed strongly invertible knot, as appropriate, by combining the techniques of \cite{GM:real} with the arguments used in \cite[Section 5]{HLS-equivariant} to show invariance of other localization spectral sequences in Heegaard Floer theory.
\end{rem}

\subsection*{Organization} This paper is organized as follows. In Section~\ref{sec:realhf} we review Guth and Manolescu's construction of real Lagrangian Floer homology and real Heegaard Floer homology. We also review the construction of the involutive symmetry and actions induced by diffeomorphism in Heegaard Floer theory. In Section~\ref{sec:Large} we review the localization theorem of Seidel-Smith and Large for symplectic involutions in Lagrangian Floer theory. In Section~\ref{sec:proof} we present the proof of Theorem~\ref{thm:main}. In Section~\ref{sec:examples} we discuss applications of Theorem~\ref{thm:main} to Heegaard Floer theory, and in particular prove Theorems~\ref{thm:branched} and Proposition \ref{prop:si}. Finally, in Section~\ref{sec:since-then} we collect some developments postdating the first appearance of this note for the reader's convenience.

\subsection*{Acknowledgments} I am grateful to Fraser Binns, Gary Guth, Tye Lidman, Robert Lipshitz, Abhishek Mallick, Ciprian Manolescu, Matt Stoffregen, and Yonghan Xiao for helpful conversations. Portions of this work were carried out at the 2025 Georgia International Topology Conference in May 2025, at a Workshop on Floer Homotopy Theory at Institut Mittag-Leffler in June 2025, and at the conference Categorification in Low-Dimensional Topology at Ruhr University Bochum in July 2025; I am grateful to all three conferences for their hospitality. Finally, my thanks to the referee for a careful reading and helpful comments.

\section{Review of real Lagrangian Floer and Heegaard Floer homology} \label{sec:realhf}

In this section we briefly review some features of Guth and Manolescu's real Lagrangian Floer and Heegaard Floer theories, focusing on those necessary to this note. In the final subsection we additionally review the construction of certain maps in ordinary Heegaard Floer theory, which will helpful to computing the $d_1$ differentials in our spectral sequences.

\subsection{Real Lagrangian Floer theory} \label{subsec:realsetup}

Here we collect the hypotheses and definitions of the version of real Lagrangian Floer theory required for this note, most of which were already mentioned in the introduction. We assume the reader is familiar with standard Lagrangian Floer homology as in \cite{Floer88:LagrangianHF}. Let $(M, \omega)$ be an exact symplectic manifold which is equivalent to a symplectization near infinity; equivalently, we assume that there is an exhausting function $g \co M \rightarrow [0, \infty)$ and an almost complex structure $J$ compatible with $\omega$ such that $\omega = -d d^{\mathbb C} g$, and such that the critical points of $g$ are contained in the preimage of some compact interval. Let $L_0$ and $L_1$ be a pair of compact exact Lagrangians. Finally, suppose that $R \co M \rightarrow M$ is an anti-symplectic involution on $M$ exchanging $L_0$ and $L_1$, so that $L_1 = R(L_0)$ and vice versa. Let $M^R$ denote the set of fixed points of $M$ under the involution, which is a new Lagrangian submanifold of $M$.

\begin{define} \cite[Section 2]{GM:real} Let $(M, L_0, L_1)$ be as above and $R$ be an anti-symplectic involution. The real Lagrangian Floer homology of $(M, L_0, L_1)$ and $R$ is
\[ \HFR(M, L_0, L_1) = \HF(M, L_0, M^R)\]
where the right-hand side denotes the ordinary Lagrangian Floer homology of the triple.
\end{define}

Guth and Manolescu equivalently construct this theory as a count of disks invariant under the action of $R$, using the following special families of almost complex structures.

\begin{define} An almost complex structure $J$ compatible with $\omega$ is said to be \emph{$R$-symmetric} if
\[ J \circ R_* = -R_* \circ J.\]
A time-dependent family $J_t$ of almost complex structures compatible with $\omega$ is $R$-symmetric if 
\[
J_t \circ R_* = -R_* \circ J_{1-t}.
\]
\end{define}

We let $\mathcal J_R$ denote the space of time-dependent families of almost complex structures compatible with $\omega$ which are additionally $R$-symmetric. The space $\mathcal J_R$ is nonempty and contractible; as Guth and Manolescu point out, this follows from (for example) the proof of \cite[Proposition 1.1]{Welschinger:rational}.

\begin{rem} In \cite{GM:real} the authors also define their theory for monotone symplectic manifolds. As this note is focused on localization results which are only known in the exact case, we do not include this part of their definition here.\end{rem}

\subsection{Real Heegaard Floer theory}

We now turn our attention to real Heegaard Floer theory. We assume the reader is familiar with ordinary Heegaard Floer theory as in \cite{OSDisks, OSProperties}. Let $Y$ be a $3$-manifold with an orientation-preserving involution $\tau$ whose fixed set has codimension two, called a real three-manifold.

\begin{define} \cite[Definition 3.1]{GM:real} A (singly-pointed) \emph{real Heegaard diagram} for $(Y, \tau)$ is a pair $(H, R)$ such that $H$ is a Heegaard diagram \[H = (\Sigma, \{\alpha_1, \dots, \alpha_g\}, \{\beta_1, \dots, \beta_g\}, w)\] and
\begin{itemize}
\item $R$ is an orientation-reversing involution on $\Sigma$;
\item $R(\alpha_i) =\beta_i$;
\item $w$ lies on the fixed set $\Sigma^R$;
\item Attaching handlebodies $U_{\alpha}$ and $U_{\beta}$ along the sets of attaching curves $\alphas = \{\alpha_1, \dots, \alpha_g\}$ and $\betas = \{\beta_1, \dots, \beta_g\}$ and considering the induced involution recovers $(Y, \tau)$.
\end{itemize}
\end{define}
Guth and Manolescu show that any two real Heegaard diagrams for $(Y, \tau)$ are connected by a series of suitable real Heegaard moves \cite[Proposition 3.14]{GM:real}. (They also consider multi-pointed real Heegaard diagrams; see \cite[Section 3.3]{GM:real}.) Moreover, they show \cite[Section 3.8]{GM:real} one can choose such real Heegaard diagrams to be weakly admissible in the sense of \cite[Definition 4.10]{OSDisks}. Given this setup, the triple $(\Sym^g(\Sigma -\{w\}), \T_{\alphas}, \T_{\betas})$ satisfies the symplectic hypotheses of Section~\ref{subsec:realsetup}. One then has the following.
\begin{define} \cite[Section 3]{GM:real} The hat variant of the real Heegaard Floer of $(Y, \tau)$ is
\[\HFRhat(Y, \tau) := \HFR(\Sym^g(\Sigma - \{w\}), \T_{\alphas}, \T_{\betas}). \]
\end{define}
Guth and Manolescu show this theory is an invariant of $(Y, \tau)$ up to isomorphism \cite[Section 5]{GM:real}.

Before moving on, we note that one may also consider the following analogous definition for a strongly invertible knot $K$ in $S^3$. Let $\tau$ be the action on $(S^3,K)$.

\begin{define} A (doubly-pointed) \emph{real Heegaard diagram} for $(S^3, K, \tau)$ is a pair $(H, R)$ such that $H$ is a Heegaard diagram \[H = (\Sigma, \{\alpha_1, \dots, \alpha_g\}, \{\beta_1, \dots, \beta_g\}, w, z)\] and
\begin{itemize}
\item $R$ is an orientation-reversing involution on $\Sigma$;
\item $R(\alpha_i) =\beta_i$;
\item $w,z$ lie on the fixed set $\Sigma^R$;
\item Attaching handlebodies $U_{\alpha}$ and $U_{\beta}$ along the sets of attaching curves $\alphas = \{\alpha_1, \dots, \alpha_g\}$ and $\betas = \{\beta_1, \dots, \beta_g\}$ and considering the induced involution recovers $(Y, \tau)$.
\item Connecting $w$ to $z$ via an arc in $U_{\alpha}$ and $z$ to $w$ via the image of the same arc in $U_{\beta}$ under $\tau$ recovers $K$ with its symmetry.
\end{itemize}
If $K$ is directed, we say that $H$ is a Heegaard diagram for the directed strongly invertible knot if $z$ lies at the starting point of the choice of oriented half-axis and $w$ lies at its endpoint.
\end{define}

With this in mind, one may set
\[\HFKR(H) =  \HF(\Sym^g(\Sigma -\{w,z\}), \T_{\alphas}, \T_{\betas}). \]
One expects that an adaptation of Guth and Manolescu's proof of the invariance of real Heegaard Floer homology for three-manifolds shows that this theory is an invariant of the knot and the symmetry.

\subsubsection{Real Euler and $\spin^c$ structures} We now give an abbreviated review of real $\spin^c$ structures, focusing on those aspects necessary to our proofs. A full discussion appears in \cite[Section 3.5]{GM:real}. 

Following Turaev's interpretation of $\spin^c$ structures on three manifolds \cite{Turaev:torsion}, we recall that an \emph{Euler structure} is a choice of non-vanishing vector field $v$ on $Y$, up to the following equivalence relation: two nonvanishing vector fields $v_0$ and $v_1$ are homologous if $v_0$ is homotopic to $v_1$ outside of some ball $B^3$. The set $\mathrm{Vec}(Y)$ of Euler structures is an affine copy of $H^2(Y; \mathbb Z)$, and in bijection with the set of $\spin^c$ structures $\Spin^c(Y)$. One obtains $\overline{\mathfrak s}$ the \emph{conjugate $\spin^c$ structure} to $\mathfrak s$ by replacing $v$ with $-v$.

Given a Heegaard diagram $H$ for $Y$, Ozsv{\'a}th  and Szab{\'o} \cite[Section 2.6]{OSDisks} define a map 
\[ \T_{\alphas} \pitchfork \T_{\betas} \rightarrow \Spin^c(Y)\]
partitioning intersection points into $\spin^c$ structures as follows. One may choose a self-indexing Morse $f$ function associated to $H$, with a single index zero and index three critical point, and $g$ critical points of index one and two, so the surface $\Sigma$ is the level set $f^{-1}(3/2)$, the $\alpha$ curves are the intersections of the ascending manifolds of index one critical points with $\Sigma$, and the $\beta$ curves are the intersections of the descending manifolds of index two critical points with $\Sigma$. Given this setup, every intersection point ${\bf x}$ determines a $g$-tuple of flowlines from index two to index one critical points, which we consider along with a flowline from the index three critical point to the index zero critical point passing through $w$. On the complement of a neighborhood of these trajectories the gradient vector field $df$ of $f$ is nonvanishing; on a neighborhood of these trajectories it can be homotoped to be nonvanishing. Hence it determines an Euler structure, and therefore a $\spin^c$ structure $\mathfrak s_w({\bf x})$. Moreover, Ozsv{\'a}th and Szab{\'o} show \cite[Section 2.4]{OSDisks} that $\Spin^c(Y)$ is in bijection with the path space $\mathcal P(\T_{\alphas}, \T_{\betas})$ in $\Sym^g(\Sigma -\{w\})$, and two intersection points ${\bf x}$ and ${\bf y}$ are in the same $\spin^c$ structure if their corresponding constant paths lie in the same path component of the path space $\mathcal P(\T_{\alphas}, \T_{\betas})$.

Let us now review the analog of this story in the real case. Given a real three-manifold $(Y, \tau)$, Guth and Manolescu show that the fixed intersection points in $(\T_{\alphas} \pitchfork \T_{\betas})^R$ may be further partitioned into \emph{real $\spin^c$ structures}. For our purposes, it suffices to note that these also have a description in terms of vector fields on $Y$ up to an appropriate equivalence relation, as follows. A \emph{real vector field} on $Y$ is a nonvanishing vector field $v$ such that $\tau_* v = -v$. An Euler structure on $Y$ \emph{admits a real structure} if it can be represented by a real vector field. Two real vector fields $v_0$ and $v_1$ are real homologous under the following conditions. Firstly, $v_0$ and $v_1$ must be homologous as vector fields. Secondly, if $TY$ is given a $\tau$-invariant metric, we consider the complex linear bundles with complex anti-linear involutions $(\langle v_0 \rangle^{\perp}, \tau_*)$ and $(\langle v_1 \rangle^{\perp}, \tau_*)$. We require that these are equivalent as real complex line bundles, that is, there is an isomorphism between them which commutes with $\tau_*$. The set $\mathrm{RVec}(Y, \tau)$ of real Euler structures is then the set of real vector fields up to the equivalence relation of being real homologous, and is in bijection with the set of real $\spin^c$ structures on $\mathrm{R}\Spin^c(Y, \tau)$, which we will not need to otherwise define. Notice that this implies that a real $\spin^c$ structure always has an ordinary underlying $\spin^c$ structure, and any given ordinary $\spin^c$ structure has some number of associated real $\spin^c$ structures, including the possibility of zero.

We now consider how invariant intersection points determine real Euler structures. Given a real Heegaard diagram, the associated self-indexing Morse function $f \co Y \rightarrow [0,3]$ may be chosen such that $f \circ \tau = 3-f$, implying that $df \circ d\tau = -df$. In this case the index one and index two critical points are interchanged by $\tau$, and given an invariant intersection point ${\bf x}$, the collection of flowlines from index one to index two critical points defined by the elements of ${\bf x}$ is also invariant, with a negation. It follows that ${\bf x}$ specifies a real Euler structure, and thus a real $\spin^c$ structure, denoted $\mathfrak s^R_{w}({\bf x})$.

We make the following observation regarding the action of $R$ on the generators associated to a real Heegaard diagram $(H,R)$. 

\begin{lem} \label{lem:spincsame} Let $(H,R)$ be a real Heegaard diagram for a real 3-manifold $(Y,\tau)$. Then if ${\bf x}$ and $R({\bf x})$ are both regarded as generators in $\CFhat(H)$, we have $\mathfrak{s}_w(R({\bf x})) = \mathfrak{s}_w({\bf x})$. \end{lem}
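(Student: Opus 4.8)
The plan is to run the Turaev/vector-field description of the map $\mathfrak{s}_w$ recalled in Section~\ref{sec:realhf} through the involution. Since $(H,R)$ is a real Heegaard diagram, the associated self-indexing Morse function $f\colon Y\to[0,3]$ may be chosen with $f\circ\tau=3-f$, so that $\tau$ interchanges the index-one and index-two critical points and $\tau_*(\nabla f)=-\nabla f$. A generator ${\bf x}$ then determines $g$ gradient trajectories running between index-two and index-one critical points, together with the trajectory through $w$ running between the index-three and index-zero critical points.

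First I would show that the trajectories attached to $R({\bf x})$ are exactly the $\tau$-translates of those attached to ${\bf x}$: for the $w$-trajectory this uses $w\in\Sigma^R$, so that $\tau$ carries this trajectory to itself; for the remaining $g$ trajectories it uses $R(\alpha_i)=\beta_i$ together with the fact that $\tau$ carries the index-one critical point cutting out $\alpha_i$ to the index-two critical point cutting out $\beta_i$ (this is where $f\circ\tau=3-f$ enters). Since a flowline of $\nabla f$ is carried by $\tau$ to a flowline of $\tau_*(\nabla f)=-\nabla f$, which is the same subset of $Y$ traversed in the opposite direction, the underlying arcs match up. Consequently a vector field representing $\mathfrak{s}_w(R({\bf x}))$ may be taken to be $\nabla f$, modified to be nonvanishing near this $\tau$-translated collection of arcs, whereas pushing forward by $\tau$ a vector field representing $\mathfrak{s}_w({\bf x})$ yields a vector field equal to $-\nabla f$ away from that same collection of arcs. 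Because the resulting Euler structure does not depend on the chosen nonvanishing modification, these two vector fields are negatives of one another up to homology, giving
\[ \tau_*\,\mathfrak{s}_w({\bf x}) \;=\; \overline{\mathfrak{s}_w(R({\bf x}))},\qquad\text{i.e.}\qquad \mathfrak{s}_w(R({\bf x})) \;=\; \overline{\tau_*\,\mathfrak{s}_w({\bf x})}. \]
Equivalently, one may package these two steps as the observation that $\tau$ carries $H$ to its conjugate Heegaard diagram $(-\Sigma,\betas,\alphas,w)$, and then combine naturality of $\mathfrak{s}_w$ under diffeomorphisms with the standard effect of conjugating a Heegaard diagram on $\mathfrak{s}_w$.

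To finish, I would invoke that $\tau$, being an orientation-preserving involution with codimension-two fixed point set, acts on $\Spin^c(Y)$ by conjugation, $\tau_*\mathfrak{s}=\overline{\mathfrak{s}}$ (as recalled in the introduction, cf.\ \cite{GM:real}); hence $\mathfrak{s}_w(R({\bf x}))=\overline{\tau_*\,\mathfrak{s}_w({\bf x})}=\overline{\overline{\mathfrak{s}_w({\bf x})}}=\mathfrak{s}_w({\bf x})$. The step I expect to require the most care is the sign bookkeeping in the previous paragraph: it is precisely the identity $\tau_*(\nabla f)=-\nabla f$ that produces the \emph{extra} conjugation in the displayed formula, so that a priori $\mathfrak{s}_w(R({\bf x}))$ is not simply $\tau_*\,\mathfrak{s}_w({\bf x})$, and one must check that this conjugation is cancelled exactly by the fact that $\tau_*$ itself acts as conjugation on $\Spin^c(Y)$; in the conjugate-diagram formulation the analogous subtlety is keeping the orientation conventions on $\Sigma$ straight.
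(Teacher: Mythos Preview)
Your argument is correct and follows essentially the same route as the paper: both use the symmetric Morse function with $f\circ\tau=3-f$ (so $\tau_*\nabla f=-\nabla f$), identify the trajectories for $R({\bf x})$ as the $\tau$-images of those for ${\bf x}$, and deduce $\mathfrak{s}_w(R({\bf x}))=\overline{\tau_*\,\mathfrak{s}_w({\bf x})}$ before cancelling the two conjugations. The only cosmetic difference is that the paper records the identity $\tau_*\,\mathfrak{s}_w({\bf x})=\overline{\mathfrak{s}_w({\bf x})}$ inline (by first viewing $R({\bf x})$ as a generator of $\CFhat(R(H))$) rather than citing it from the introduction; your ``equivalently'' repackaging via the conjugate diagram $\overline{H}=(-\Sigma,\betas,\alphas,w)$ is precisely the paper's perspective.
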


\begin{proof} Consider a real Heegaard diagram $(H,R)$ for $(Y, \tau)$. If ${\bf x} \in \T_{\alphas} \pitchfork \T_{\betas}$, so that ${\bf x}$ is a generator in $\CFhat(H)$, then the construction of $f$ implies that $df \circ d\tau =-df$. Therefore if we regard $R({\bf x})$ as a generator of $\CFhat(R(H))$, we have that $\mathfrak s_{w}(R({\bf x})) = \overline{\mathfrak s_{w}(x)}$. However, if instead we regard $R({\bf x})$ as a generator of $\CFhat(H)$, then given a set of flowlines for $f$ specified by ${\bf x}$, applying $\tau$ and then reversing the sign produces a set of flowlines for $f$ specified by $R({\bf x})$. We conclude that, if both intersection points ${\bf x}$ and $R({\bf x})$ are regarded as lying in $\CFhat(H)$, we have $\mathfrak{s}_w(R({\bf x})) = \mathfrak{s}_w({\bf x})$. \end{proof}

Finally, we briefly discuss the case of knots. Recall that if $H = (\Sigma, \alphas, \betas, w,z)$ is a Heegaard diagram for a knot $K$ in the three-sphere, then there is an assignment of an Alexander grading $A({\bf x}) \in \mathbb Z$ to each intersection point ${\bf x} \in \T_{\alphas} \pitchfork \T_{\betas}$, and two intersection points have the same Alexander gradings if an only if the constant paths at those points lie in the same path component of $\mathcal P(\T_{\alphas}, \T_{\betas})$ in $\Sym^g(\Sigma - \{z,w\})$ \cite[Section 2]{OSKnots}. A full treatment of real knot Floer homology should also consider real Alexander gradings, but these will not be necessary for the purposes of this note. Instead we conclude with the analog of Lemma~\ref{lem:spincsame}, done slightly more concretely in the knot case.

\begin{lem} \label{lem:Alexandersame} If $(H,R)$ is a real Heegaard diagram for a strongly invertible knot $K$ and ${\bf x} \in \T_{\alphas} \pitchfork \T_{\betas}$, then if ${\bf x}$ and $R({\bf x})$ are both regarded as generators of $\CFKhat(H)$, they lie in the same Alexander grading. \end{lem}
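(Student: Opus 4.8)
The plan is to read off the Alexander grading from the basepoint multiplicities and then exploit the symmetry $R$ directly. Recall from Ozsv\'ath--Szab\'o \cite[Section 2]{OSKnots} that for ${\bf x},{\bf y}\in\T_{\alphas}\pitchfork\T_{\betas}$ one has $A({\bf x})-A({\bf y})=n_z(\phi)-n_w(\phi)$ for any Whitney disk $\phi\in\pi_2({\bf x},{\bf y})$; since $S^3$ carries a single $\spin^c$ structure, $\pi_2({\bf x},{\bf y})$ is always nonempty and this difference does not depend on $\phi$. Note also that $R(\T_{\alphas}\pitchfork\T_{\betas})=\T_{\betas}\pitchfork\T_{\alphas}=\T_{\alphas}\pitchfork\T_{\betas}$, so $R$ really is an involution of the generating set of $\CFKhat(H)$. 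It therefore suffices to prove the relative identity
\[A(R({\bf x}))-A(R({\bf y}))=A({\bf x})-A({\bf y})\qquad\text{for all }{\bf x},{\bf y},\]
because substituting ${\bf y}=R({\bf x})$ (so $R({\bf y})={\bf x}$) turns it into $A(R({\bf x}))-A({\bf x})=A({\bf x})-A(R({\bf x}))$, i.e.\ $A(R({\bf x}))=A({\bf x})$.

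To get the relative identity I would transport Whitney disks by $\Sym^g(R)$. Since $R$ is an orientation-reversing involution of $\Sigma$ with $R(\alpha_i)=\beta_i$, $R(\beta_i)=\alpha_i$ and, crucially, $R(w)=w$ and $R(z)=z$, the map $\Sym^g(R)$ interchanges $\T_{\alphas}$ and $\T_{\betas}$ and carries each of the divisors $\{w\}\times\Sym^{g-1}(\Sigma)$ and $\{z\}\times\Sym^{g-1}(\Sigma)$ to itself. Applying $\Sym^g(R)$ to $\phi\in\pi_2({\bf x},{\bf y})$ and reparametrizing the domain disk by the reflection $\zeta\mapsto\bar\zeta$ (which interchanges the $\T_{\alphas}$- and $\T_{\betas}$-boundary arcs while fixing the two marked points) yields a Whitney disk $R_*\phi\in\pi_2(R({\bf x}),R({\bf y}))$; comparing it with the two divisors above gives $n_w(R_*\phi)=n_w(\phi)$ and $n_z(R_*\phi)=n_z(\phi)$. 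The displayed identity then follows from the formula of the first paragraph.

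The step needing care is the sign in $n_w(R_*\phi)=n_w(\phi)$ and $n_z(R_*\phi)=n_z(\phi)$: $R$ reverses the orientation of $\Sigma$ and hence the co-orientation of $\{w\}\times\Sym^{g-1}(\Sigma)$ (and of $\{z\}\times\Sym^{g-1}(\Sigma)$), but the reparametrization $\zeta\mapsto\bar\zeta$ reverses the orientation of the domain disk as well, so the two sign changes cancel and the intersection numbers are genuinely preserved. It is essential that $R$ \emph{fixes} $w$ and $z$, so that it is $n_w$ and $n_z$ that reappear and not $n_z$ and $n_w$; this is exactly where the hypothesis $w,z\in\Sigma^R$ (strong invertibility with $w,z$ on the axis) enters, and exactly what distinguishes $R$ from the involutive conjugation symmetry $\iota_K$, whose defining diagram interchanges $w$ and $z$ and which consequently reverses the Alexander grading. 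One could equally run the argument Morse-theoretically, in the style of the proof of Lemma~\ref{lem:spincsame}, by comparing the relative $\spin^c$ structures on $S^3\setminus K$ determined by ${\bf x}$ and $R({\bf x})$ using $\tau_*(df)=-df$ and the fact that $\tau$ preserves $K$.
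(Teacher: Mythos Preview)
Your proof is correct and follows essentially the same mechanism as the paper's: transport a Whitney disk by $\Sym^g(R)$ (with the complex-conjugation reparametrization to restore the $\T_{\alphas}/\T_{\betas}$ boundary conditions) and use that $R$ fixes both $z$ and $w$ so that $n_z$ and $n_w$ are preserved. The only difference is in how the relative statement is converted to an absolute one. The paper observes that $|\T_{\alphas}\pitchfork\T_{\betas}|$ is odd for a knot in $S^3$, so $R$ has a fixed generator ${\bf y}$, and then compares both ${\bf x}$ and $R({\bf x})$ to that ${\bf y}$; you instead prove $A(R({\bf x}))-A(R({\bf y}))=A({\bf x})-A({\bf y})$ for all pairs and substitute ${\bf y}=R({\bf x})$ to deduce $2\bigl(A(R({\bf x}))-A({\bf x})\bigr)=0$ over $\Z$. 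Your route is a hair slicker in that it avoids the parity/fixed-point observation; the paper's route makes the existence of an $R$-invariant generator explicit, which is independently pleasant. Either way the content is the same.
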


\begin{proof} We first note that since $|\T_{\alphas} \pitchfork \T_{\betas}|$ is always odd for knots in $S^3$, and $R$ either fixes generators or exchanges them in pairs, there must be at least one fixed generator ${\bf y}$. Moreover, for any generator ${\bf x}$ there is a topological disk $\phi$ from ${\bf x}$ to ${\bf y}$ in $(\Sym^g(\Sigma), \T_{\alphas}, \T_{\betas})$. Then $A({\bf x}) - A({\bf y}) = n_{z}(\phi)-n_w(\phi)$, where $n_z(\phi) = \#(\mathrm{Im}(\phi) \cap V_z)$ is the intersection number between the image of $\phi$ and the divisor $V_z = \{z\} \times \Sym^{g-1}(\Sigma)$, and likewise $n_{w}$ \cite[Lemma 2.5]{OSKnots}. Since $R$ fixes $z$ and $w$, the image of $R(\phi)$ considered as a disk in $(\Sym^g(\Sigma), \T_{\alphas}, \T_{\betas})$ is a topological disk from $R({\bf x})$ to ${\bf y}$ with the same intersection numbers with the divisors. \end{proof}

\subsection{Actions on Heegaard Floer homology} \label{sec:actions}

We conclude with a discussion of certain actions on ordinary Heegaard Floer homology, which will allow us to compute the $d_1$ differentials of the spectral sequences of Theorem~\ref{thm:branched} and Proposition~\ref{prop:si} under special hypotheses.

Let $Y$ be a three-manifold, and $\mathfrak s$ be a $\spin^c$ structure on $Y$. Let $H = (\Sigma, \alphas, \betas, w)$ be a Heegaard diagram for $Y$. For this section, we must consider Heegaard diagrams together with a choice of a family of almost complex structures $J=J_t$ on $\Sigma$ such that $\Sym(J)$ achieves transversality for $(\Sym^g(\Sigma -\{w\}), \T_{\alphas}, \T_{\betas})$. Let $\cH=(H, J)$ refer to such a full set of Heegaard data.

We begin by reviewing the involutive conjugation symmetry for three-manifolds \cite{HMInvolutive}. Given $\cH$ a set of Heegaard data for $Y$ as above, we say that the conjugate diagram is $\overline{H} = (-\Sigma, \betas, \alphas, w)$, and the conjugate set of Heegaard data is $\overline{\cH} =(\overline{H}, \overline{J})$, where more precisely if $J=J_t$ then $\overline{J}_t = -J_{1-t}$. There is a chain isomorphism
\begin{align*} \eta \co \CFhat(\cH, \mathfrak s) &\rightarrow \CFhat(\overline{\cH}, \overline{\mathfrak s}) \\
							{\bf x} &\mapsto {\bf x}.
\end{align*}
Now, since $\cH$ and $\overline{\cH}$ are both sets of Heegaard data for $(Y,w)$, the naturality theorem for Heegaard Floer homology of Juh{\'a}sz-Thurston-Zemke \cite{JTNaturality} shows there is a naturality map, unique up to homotopy, sending
\[ \Phi_{\overline{\cH}, \cH} \co \CFhat(\overline{\cH}, \overline{\mathfrak s}) \rightarrow \CFhat(\cH, \overline{\mathfrak s}).\]
The composition $\Phi_{\overline{\cH}, \cH} \circ \eta$ is the involutive conjugation symmetry $\iota$ on $\CFhat(\cH)$ defined by the author and Manolescu. We let $\iota_*$ denote the induced map\footnote{In some references, the induced map $\iota_*$ on homology is called $J$ or $\mathcal J$; we avoid this out of fear of confusion with the almost complex structure.} on $\HFhat(Y,w, \mathfrak s)$.

We now review maps induced by diffeomorphisms. Given an orientation-preserving diffeomorphism $\tau$ on $Y$ fixing a basepoint $w$ and a set of Heegaard data $\cH = (H, J)$, we let $\tau(\cH) = (\tau H, \tau J)$, where $\tau H$ is the Heegaard diagram obtained by taking the image of $\Sigma$ together with its decorations under $\tau$ and $\tau J = d\tau \circ J \circ (d\tau)^{-1}$ is the pushforward of the almost complex structure. There is then a tautological chain isomorphism\footnote{Sometimes this chain isomorphism is called by the name of the diffeomorphism, in this case $\tau$.}
\begin{align*} \eta_{\tau} \co \CFhat(\cH, \mathfrak s) &\rightarrow \CFhat(\tau\cH, \tau_* \mathfrak s) \\
						{\bf x} &\mapsto \tau{\bf x}. \end{align*}
There is again a naturality map $\Phi_{\tau \cH, \cH} \co \CFhat(\tau\cH, \tau_* \mathfrak s) \rightarrow \CFhat(\cH, \tau_* \mathfrak s)$. The composition of these maps $\Phi_{\tau \cH, \cH} \circ \eta_{\tau}$ is the chain map $\tau$ on $\CFhat(\cH)$ which induces the map $\tau_*$ on $\HFhat(Y, w)$.

We observe the following special case. If $H$ is a real Heegaard diagram for $(Y,\tau)$ and $J$ is a family of $R$-symmetric almost complex structures on $\Sigma$ such that $\Sym^g(J)$ achieves transversality, then the conjugate set of Heegaard data $\overline{H}$ is exactly $\tau(\cH)$, and $\tau_* \mathfrak s = \overline{\mathfrak s}$. This implies the following.

\begin{lem} \label{lem:iotatau} Let $(Y, \tau)$ be a real 3-manifold and $\cH =(H,J)$ a set of Heegaard data such that $J$ is $R$-symmetric and $\Sym^g(J)$ achieves transversality. Then the composition $\iota \circ \tau$ is chain homotopic to the composition $\eta \circ \eta_{\tau}$. \end{lem}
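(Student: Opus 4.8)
The plan is to use the observation immediately preceding the statement, namely that the hypotheses force $\overline{\cH} = \tau(\cH)$ as Heegaard data and $\overline{\mathfrak s} = \tau_*\mathfrak s$; this makes $\iota = \Phi_{\overline{\cH},\cH}\circ\eta$ and $\tau = \Phi_{\tau\cH,\cH}\circ\eta_\tau$ into two maps built from the \emph{same} naturality map $\Phi_{\overline{\cH},\cH}$ and differing only in their tautological pieces, $\eta\co{\bf x}\mapsto{\bf x}$ and $\eta_\tau\co{\bf x}\mapsto R({\bf x})$, both of which are now ``permutation of generators'' chain isomorphisms $\CFhat(\cH)\to\CFhat(\overline{\cH})$. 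The strategy is to repackage $\tau$ as $\iota$ composed with a combinatorial symmetry and then reduce to a known property of $\iota$. Concretely, I would set $\rho := \eta^{-1}\circ\eta_\tau\co\CFhat(\cH)\to\CFhat(\cH)$; since $\eta$ and $\eta^{-1}$ are the identity on generators, $\rho$ is the map ${\bf x}\mapsto R({\bf x})$, and one checks directly that $\rho$ is exactly the composition the lemma calls $\eta\circ\eta_\tau$ (the ``$\eta$'' there is the conjugation identification for the data $\tau\cH$, and it lands back in $\CFhat(\overline{\tau\cH})=\CFhat(\cH)$ because $\overline{\overline{\cH}}=\cH$).

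The key steps, in order, are then: (1) the tautological identity $\eta_\tau=\eta\circ\rho$; (2) substituting this into the definition of $\tau$, and choosing $\Phi_{\tau\cH,\cH}$ and $\Phi_{\overline{\cH},\cH}$ to be the same chain map (legitimate since $\tau\cH=\overline{\cH}$ as Heegaard data), to obtain
\[ \tau \;=\; \Phi_{\tau\cH,\cH}\circ\eta_\tau \;=\; \Phi_{\overline{\cH},\cH}\circ\eta\circ\rho \;=\; \iota\circ\rho; \]
(3) hence $\iota\circ\tau = \iota^2\circ\rho$; and (4) invoking the theorem of Hendricks--Manolescu \cite{HMInvolutive} that $\iota$ is a homotopy involution on $\CFhat$, i.e.\ $\iota^2\simeq\id$, and cancelling the chain isomorphism $\rho$ to conclude $\iota\circ\tau = \iota^2\circ\rho \simeq \rho = \eta\circ\eta_\tau$.

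The one genuinely external input is $\iota^2\simeq\id$; everything else is formal. The step I expect to require the most care is simply the bookkeeping of which ``tautological identification'' is in play where --- there are several maps that all act as ``the identity (or $R$) on the common generating set'' but have different sources and targets ($\eta$ for $\cH$ versus $\eta$ for $\overline{\cH}=\tau\cH$, plus their inverses) --- and verifying that the relevant compositions genuinely close up on $\CFhat(\cH)$ using $\overline{\overline{\cH}}=\cH$ and $\overline{\overline{\mathfrak s}}=\mathfrak s$. As an alternative that avoids quoting $\iota^2\simeq\id$, one can instead expand $\iota\circ\tau=\Phi_{\overline{\cH},\cH}\circ\eta\circ\Phi_{\overline{\cH},\cH}\circ\eta_\tau$ and use that the conjugation identification commutes with naturality maps up to chain homotopy together with the composition law $\Phi_{\overline{\cH},\cH}\circ\Phi_{\cH,\overline{\cH}}\simeq\Phi_{\cH,\cH}\simeq\id$; but the reduction to $\iota^2$ above is shorter and more transparent.
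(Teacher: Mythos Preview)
Your proposal is correct. Your main route (factoring $\tau = \iota\circ\rho$ with $\rho = \eta\circ\eta_\tau$ and then cancelling via $\iota^2\simeq\id$) differs slightly from the paper's, but your ``alternative'' at the end \emph{is} the paper's argument: the paper expands $\iota\circ\tau = \Phi_{\overline{\cH},\cH}\circ\eta\circ\Phi_{\tau\cH,\cH}\circ\eta_\tau$, commutes $\eta$ past the naturality map (using $\Phi_{\overline{\cH},\cH}\circ\eta \simeq \eta\circ\Phi_{\cH,\overline{\cH}}$ from \cite{HMInvolutive}), and collapses $\Phi_{\cH,\overline{\cH}}\circ\Phi_{\overline{\cH},\cH}\simeq\id$. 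The two approaches are essentially equivalent, since the paper's two ingredients are precisely what one uses to prove $\iota^2\simeq\id$ in the first place; your packaging is arguably cleaner, at the cost of invoking that theorem as a black box rather than exposing the mechanism. Your warning about bookkeeping is apt --- the only subtlety is that the two ``$\eta$''s in play (one $\CFhat(\cH)\to\CFhat(\overline{\cH})$ and one $\CFhat(\overline{\cH})\to\CFhat(\cH)$) are formally different maps that happen to coincide on generators, which is exactly why $\eta^{-1}\circ\eta_\tau$ and $\eta\circ\eta_\tau$ name the same map $\rho$.
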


\begin{proof} We may manipulate the composition map as follows:
\begin{align*}
\iota \circ \tau &\simeq \Phi_{\overline{\cH}, \cH} \circ \eta \circ  \Phi_{\tau(\mathcal H), \mathcal H} \circ \eta_{\tau} \\
				&\simeq \eta \circ \Phi_{\cH, \overline{\cH}} \circ  \Phi_{\overline{\mathcal H}, \mathcal H} \circ \eta_{\tau} \\
				&\simeq \eta \circ \eta_{\tau}.
\end{align*}
Here the second line uses the fact that $\eta \circ  \Phi_{\overline{\mathcal H}, \mathcal H} \simeq \eta \circ \Phi_{\cH, \overline{\cH}}$ \cite[Section 2]{HMInvolutive} and that $\tau(\cH) = \overline{\cH}$, and the third line uses the fact that naturality maps are unique up to chain homotopy \cite{JTNaturality}.
\end{proof}

We now turn our attention to the knot case, which is slightly more complicated. For an arbitrary knot $K$ in the three-sphere, let $H = (\Sigma, \alpha, \beta, w, z)$ be a Heegaard diagram for $K$, and $J= J_t$ any compatible family of almost complex structures on $\Sigma$ such that $\Sym^g(J)$ achieves transversality on $(\Sym^g(\Sigma - \{w,z\}), \T_{\alphas}, \T_{\betas})$. This gives us a set of Heegaard data $\cH = (H,J)$ for $(S^3, K)$.  The conjugate set of Heegaard data $\overline{\cH} = (\overline{H}, \overline{J})$ has $H = (-\Sigma, \betas, \alphas, z, w)$. There is a chain isomorphism
\[ \eta_K \co \CFKhat(\cH) \rightarrow \CFKhat(\overline{\cH}).\]
One cannot quite return to the original diagram via the naturality maps of Juh{\'a}sz-Thurston-Zemke, since the roles of the basepoints have been interchanged. Instead, one chooses a basepoint-moving automorphism $\rho$ of $(S^3, K)$ which switches $w$ and $z$ using a half-twist along $K$. Then there is a naturality map $\Phi_{\rho(\cH), \overline{\cH}}$. The involutive conjugation action $\iota_K$ is then the composition
\[ \CFKhat(\cH) \xrightarrow{\eta_{\rho}} \CFKhat(\rho \cH) \xrightarrow{\Phi_{\rho(\cH), \overline{\cH}}} \CFKhat(\overline{\cH}) \xrightarrow{\eta_K} \CFKhat(\cH)\]
where here $\eta_{\rho}$ is the tautological isomorphism between $\CFKhat(\cH)$ and $\CFKhat(\rho \cH)$ from applying $\rho$, defined analogously to $\eta_{\tau}$. The resulting map $\iota_K$ is order four up to homotopy \cite[Section 6.1]{HMInvolutive}.

Now suppose we have a directed strongly invertible knot $K$, with real Heegaard data $\cH=(H,J)$ consisting of a real Heegaard diagram $(H,R)$ for $K$ with its action and direction together with some family of almost complex structures achieving transversality. Following Mallick \cite[Definition 6.1]{Mallick:surgery}, the action $\tau_K$ associated to $K$ on $\CFKhat(\cH)$ may be defined as follows. Let $\tau(\cH)$ as previously denote the Heegaard data $\cH$ pushed forward by $\tau$. Furthermore let $\cH^r = (H^r, J)$ with $H^r = (\Sigma, \alpha, \beta, z, w)$, so that $\cH^r$ is a set of Heegaard data for the knot with the orientation reversed. There is a tautological automorphism between $\CFhat(\cH)$ and $\CFhat(\cH^r)$ by swapping the roles of the basepoints, called $sw$ in the literature.  Then $\tau_K$ is the composition of the maps
\[ \CFKhat(\cH) \xrightarrow{\eta_{\rho}} \CFKhat(\rho\cH) \xrightarrow{\Phi_{\rho \cH, \tau \cH^r }} \CFKhat(\tau \cH^r) \xrightarrow {\eta_{\tau}} \CFKhat(\cH^r) \xrightarrow{sw} \CFKhat(\cH). \]
Here $\eta_{\tau}$ is the chain isomorphism coming from applying $\tau$ to $\tau(\cH^r)$, bearing in mind that $\tau$ is topologically an involution. The resulting composition $\tau_K$ has $\tau_K^2$ chain homotopic to the identity \cite[Section 6]{Mallick:surgery}. 

We observe that in the special case that there is a family of $R$-symmetric almost complex structures achieving transversality, we have that $\tau(\cH^r)$ is precisely $\overline{\cH}$. Analogously to the 3-manifold case, we note the following property of the composition of these maps $\iota_K$ and $\tau_K$. 
\begin{lem} \label{lem:iotaKtauK} Let $H$ be a real Heegaard diagram for a strongly invertible knot $K$ admitting a compatible family of almost complex structures $J$ such that $J$ is $R$-symmetric and $\Sym^g(J)$ achieves transversality. Then the composition $\iota_K \circ \tau_K$ is chain homotopic to $\eta_{\tau} \circ \eta_K \circ sw$. \end{lem}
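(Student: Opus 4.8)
The plan is to isolate the part of the definitions of $\iota_K$ and $\tau_K$ that they have in common and to play it off against the relation $\tau_K^{2}\simeq\id$; this is a streamlined version of the computation in the proof of Lemma~\ref{lem:iotatau}. In the $R$-symmetric case the identity $\tau(\cH^r)=\overline{\cH}$ recorded just above the lemma holds, so the naturality map $\Phi_{\rho\cH,\tau\cH^r}$ appearing in the definition of $\tau_K$ has the same source and target as the naturality map $\Phi_{\rho(\cH),\overline{\cH}}$ appearing in the definition of $\iota_K$. Since naturality maps between a fixed ordered pair of sets of Heegaard data agree up to chain homotopy \cite{JTNaturality}, and $\iota_K$ and $\tau_K$ are built from the same half-twist automorphism $\rho$, both maps factor through the common ``inner'' map $\Psi:=\Phi_{\rho\cH,\overline{\cH}}\circ\eta_{\rho}\colon\CFKhat(\cH)\to\CFKhat(\overline{\cH})$: one has $\iota_K\simeq\eta_K\circ\Psi$ and $\tau_K\simeq sw\circ\eta_{\tau}\circ\Psi$, where in the second identity $\eta_{\tau}$ is read as the tautological isomorphism $\CFKhat(\overline{\cH})\to\CFKhat(\cH^r)$.

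From these two factorizations one reads off $\eta_K^{-1}\circ\iota_K\simeq\Psi\simeq\eta_{\tau}^{-1}\circ sw^{-1}\circ\tau_K$. Composing on the right with a homotopy inverse of $\tau_K$ and on the left with $\eta_K$ yields $\iota_K\circ\tau_K^{-1}\simeq\eta_K\circ\eta_{\tau}^{-1}\circ sw^{-1}$, and since $\tau_K^{2}$ is chain homotopic to the identity \cite[Section~6]{Mallick:surgery} one may replace $\tau_K^{-1}$ by $\tau_K$. Finally, $\tau$ is an honest involution of $(S^3,K)$ so $\eta_{\tau}^{-1}$ is again a map of the form $\eta_{\tau}$ (now applied to $\cH^r$), and $sw^{-1}=sw$; and since $\eta_K$, $\eta_{\tau}$ are each given by the tautological rule on generators, a direct check shows $\eta_K\circ\eta_{\tau}$ and $\eta_{\tau}\circ\eta_K$ agree on generators. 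Assembling these identifications gives $\iota_K\circ\tau_K\simeq\eta_{\tau}\circ\eta_K\circ sw$, as claimed.

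The step I expect to be the real content, rather than formal manipulation, is the first one: verifying that in the $R$-symmetric setting the half-twist automorphism and the remaining basepoint data may be chosen so that the ``inner'' map $\Psi$ in Mallick's construction of $\tau_K$ \cite{Mallick:surgery} genuinely coincides, up to homotopy, with the one underlying the construction of $\iota_K$ in \cite{HMInvolutive}. This is exactly where the chosen \emph{direction} of the strongly invertible knot enters: changing the direction postcomposes one of the two maps by the Sarkar involution and would destroy the identification, cf.\ \cite[Section~3.5, Lemma~2.21]{DMS:equivariant}. Carefully reconciling the basepoints, the orientation reversal $\cH\mapsto\cH^r$, and the half-twist is thus the crux; everything afterwards is the algebra above.
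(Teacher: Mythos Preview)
Your argument is correct and is essentially the paper's proof in a repackaged form: both replace $\tau_K$ by $\tau_K^{-1}$ via $\tau_K^2\simeq\id$, use the $R$-symmetric identity $\tau(\cH^r)=\overline{\cH}$ to identify the two naturality maps, and then cancel the shared $\Phi\circ\eta_\rho$ piece against its inverse. The paper writes out the full composition $\iota_K\circ\tau_K^{-1}$ and cancels in place, whereas you first abstract the common factor $\Psi$ and manipulate algebraically, but the content is the same (and your extra commutation $\eta_K\circ\eta_\tau=\eta_\tau\circ\eta_K$ is indeed immediate on generators).
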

\begin{proof} We may manipulate the composition as follows.
\begin{align*} 
\iota_K \circ \tau_K &\simeq \iota_K \circ \tau_K^{-1} \\
						&\simeq \eta_K \circ \Phi_{\rho(\cH), \overline{\cH}} \circ \eta_{\rho} \circ \eta_{\rho}^{-1} \circ \Phi_{\tau(\cH^r), \rho(\cH)} \circ \eta_{\tau} \circ sw \\
						&\simeq \eta_K \circ \Phi_{\rho(\cH), \overline{\cH}} \circ \Phi_{\tau(\cH^r), \rho(\cH)} \circ \eta_{\tau} \circ sw \\
						&\simeq \eta_K \circ \eta_{\tau} \circ sw.
\end{align*}
The above rearrangement uses the following: firstly, $\eta_{\tau}$ and $sw$ are their own inverses. Secondly, up to homotopy the inverse of $\Phi_{\rho(\cH), \tau(\cH^r)}$ is the naturality map $\Phi_{\tau(\cH^r), \rho(\cH)}$. Finally, as noted above, $\tau(\cH^r)$ is $\overline{\cH}$. Note that it is also the case that $sw$ commutes with both $\eta_{\tau}$ and $\eta_K$ and in particular can be placed at any location in the composition above.
\end{proof}

\begin{rem} The expert reader may be aware that it is more common to define $\tau_K$ using Heegaard data for which the basepoints are interchanged rather than fixed, as in \cite{DMS:equivariant} and the initial definitions of \cite{Mallick:surgery}. We briefly discuss the correspondence between these pictures, which also helps to explain the role of the direction in our discussion. Given a strongly invertible knot $K$, the fixed axis divides $K$ into two subarcs. Given a decoration on $K$, Dai-Mallick-Stoffregen place basepoints $z'$ and $w'$ on $K$ such that they determine an orientation on $K$ in which the subarc containing $z'$ has orientation matching that of the oriented half-axis \cite[Figure 9]{DMS:equivariant}; such a choice of orientation and basepoint placement is said to be \emph{compatible with the direction}. With this choice, they show that $(\CFK^-(S^3,K), \tau_K, \iota_K)$ is determined up to chain homotopy equivalence. In this work, we must work with fixed basepoints; we place $z$ at the starting point of the oriented axis and $w$ at the endpoint, so that pushing $z$ and $w$ by a quarter-circle in the direction of the orientation produces a $z'$ and $w'$ compatible with the direction in the sense of Dai-Mallick-Stoffregen. Mallick \cite[Proposition 6.2]{Mallick:surgery} shows that the map associated to this quarter Dehn twist interchanges $\tau_K$ defined by the formula above with $\tau_K$ as defined for the nonfixed basepoints $z'$ and $w'$, up to homotopy. The same map also interchanges $\iota_K$ defined for on $\CFK(S^3,K,z,w)$ and $\CFK(S^3,K,z',w')$  up to homotopy \cite[Proposition 2.8 and Proposition 6.3]{HMInvolutive}, so our choice of the placement of the $z$ and $w$ basepoints is also compatible with the direction. One may straightforwardly check that rotating by a quarter circle against the direction of the orientation instead produces the twist by the Sarkar map associated to changing the direction in \cite[Section 3.5]{DMS:equivariant}.

Finally, note that it is previously known that $\iota_K \tau_K$ is an involution up to homotopy, even though $\iota_K$ is only order four up to homotopy. Indeed, Dai, Mallick, and Stoffregen \cite[Theorem 1.7]{DMS:equivariant} show that $\tau_K \circ \iota_K \simeq \xi_K \circ \iota_K \circ \tau_K$, where $\xi_K$ is Sarkar's involution associated to a Dehn twist around $K$. As $\xi_K \iota_K \simeq \iota_K^{-1}$, it follows that $(\iota_K \tau_K)^2 \simeq \mathrm{Id}$. 
\end{rem}

\section{Review of localization in Lagrangian Floer theory} \label{sec:Large}

In this section we recall Large's generalization of Seidel and Smith's localization theorem for Lagrangian Floer theory. We begin by reviewing the following definitions, adapted from \cite[Section 3.2]{Large}.

\begin{define} Let $M$ be a symplectic manifold containing Lagrangians $L_0$ and $L_1$. A \emph{set of polarization data} for $(M, L_0, L_1)$ is a triple $\mathfrak p = (E, F_0, F_1)$ such that
\begin{itemize}
\item $E$ is a symplectic vector bundle over $M$, and
\item $F_i$ is a Lagrangian subbundle of $E|_{L_i}$ for $i=0,1$.
\end{itemize}
\end{define}

Given $\mathfrak p = (E, F_0, F_1)$ a set of polarization data for $(M, L_0, L_1)$, one may stabilize by a trivial bundle to obtain $\mathfrak p \oplus \underline{\bC}^N = (E \oplus  \underline{\bC}^N, F_0 \oplus \underline{\bR}^N, F_1 \oplus i \underline{\bR}^N)$.

\begin{define} Let $\mathfrak p = (E, F_0, F_1)$ and $\mathfrak p' = (E', F_0', F_1')$ be two sets of polarization data for $(M, L_0, L_1)$. An \emph{isomorphism of polarization data} between $\mathfrak p$ and $\mathfrak p'$ is an isomorphism of symplectic vector bundles 
\[\psi \co E \rightarrow E'\]
such that there are homotopies through Lagrangian subbundles of $(E')|_{L_i}$ between $\psi(L_i)$ and $L_i'$ for $i=0,1$. A \emph{stable isomorphism of polarization data} between $\mathfrak p$ and $\mathfrak p'$ is an isomorphism of polarization data between $\mathfrak p \oplus \underline{\bC}^{N_1}$ and $\mathfrak p' \oplus \underline{\bC}^{N_2}$ for some $N_1$ and $N_2$.\end{define}

Let $(M,\omega)$ be an exact symplectic manifold which is convex at infinity, and let $L_0$ and $L_1$ be two exact Lagrangians such that for each $i=0,1$, either $L_i$ is compact, or it is the case that $M$ is a symplectization near infinity and $L_i$ is conical. (A Lagrangian is conical if near infinity it is the cone on a Legendrian in a level set of the exhausting function on $M$.) If both Lagrangians are conical near infinity we additionally require they be disjoint near infinity. Let $S$ be a symplectic involution on $M$, so that $S^*\omega = \omega$ and $S^2= \mathrm{Id}$, and let $S$ fix each of the two Lagrangians setwise, so that $S(L_i) = L_i$ for $i=1,2$.\footnote{More typically the involution is denoted $\tau$ or $\iota$; in this paper, $\tau$ is used for actions on $3$-manifolds, $\iota$ is used for the involutive conjugation involution, $S$ is used for symplectic actions on symplectic manifolds, and $R$ is used for anti-symplectic actions on symplectic manifolds.} Let $(M^S, L_0^S, L_1^S)$ denote the fixed sets with respect to $S$. In the case that either $M^S$ is connected or that all of the connected components of $M^S$ have the same dimension, it is straightforward to see that $M^S$ is itself an exact symplectic manifold which is convex at infinity and $L_0^S$ and $L_1^S$ are two exact Lagrangians satisfying the same hypotheses as previously, that is, each is compact or conical at infinity. The \emph{normal polarization} is the set of polarization data $(NM^S, NL_0^S, NL_1^S)$ consisting of the normal bundle to $M^S$ inside $M$, which is a symplectic vector bundle, together with the normal bundles $NL_i^S$ to $L_i^S$ inside $L_i$ for $i=0,1$, which are Lagrangian subbundles of $(NM^S)|_{L_i}$. This brings us to the following definition of Large \cite{Large}.

\begin{define} \label{def:stable-normal} A \emph{stable tangent-normal isomorphism} is a stable isomorphism of polarization data between the tangent polarization $(TM^{S}, TL_0^{S}, NL_1^{S})$ and the normal polarization $(NM^{S}, NL_0^{S}, NL_1^{S})$. \end{define}

We remind the reader that, since the symplectic group deformation retracts onto the unitary group, the theory of symplectic vector bundles is isomorphic to the theory of complex vector bundles. Therefore, the definitions above may be equivalently rephrased in terms of complex bundles with half-dimensional totally real subbundles.

We are now ready to state Large's localization theorem, as follows.

\begin{thm}\cite{Large}\label{thm:localization} Suppose that $(M,L_0,L_1)$ and $S$ satisfy the hypotheses above, and furthermore the triple $(M^{S}, L_0^{S}, L_1^{S})$ admits a stable tangent-normal isomorphism. There is a spectral sequence whose $E_1$ page is isomorphic to \[ HF(M, L_0,L_1)\otimes \mathbb F[\theta, \theta^{-1}]\] and whose $E_{\infty}$ page is isomorphic to \[ HF(M^S, L_0^S, L_1^S)\otimes \mathbb F[\theta, \theta^{-1}],\] and a corresponding dimension inequality
\[ \dim \left( HF(M, L_0, L_1)\right) \geq \dim \left( HF\left(M^S, L_0^S, L_1^S\right)\right).\]
\end{thm}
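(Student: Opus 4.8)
\emph{Overview.} The plan is to reconstruct the proof of Seidel--Smith \cite{SS10} in the generality of Large, realizing the spectral sequence as the $\theta$-power spectral sequence of a localized $\bZ/2$-equivariant Floer complex and identifying its $E_\infty$ page with the Floer homology of the fixed locus through an equivariant localization isomorphism. As a preliminary I would record that the exactness of $(M,\omega)$, its convexity at infinity, and the exactness together with compactness or conicality of $L_0,L_1$ rule out disk and sphere bubbling and provide the compactness needed for all moduli spaces below to be well behaved, both for $(M,L_0,L_1)$ and for the fixed-locus triple $(M^S,L_0^S,L_1^S)$, which by the discussion preceding Definition~\ref{def:stable-normal} inherits the same hypotheses.

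\emph{The $E_1$ page.} First I would choose $S$-invariant Floer data so that $S$ descends to a chain map, still denoted $S$, on the Floer complex $CF(M,L_0,L_1)$ over $\bF=\bF_2$. From this I would assemble the localized Borel--Tate equivariant complex: as an $\bF[\theta,\theta^{-1}]$-module it is $CF(M,L_0,L_1)\otimes\bF[\theta,\theta^{-1}]$, carrying a differential whose leading terms are $\partial+(1+S)\theta$, the higher powers of $\theta$ being supplied by a system of coherent homotopies witnessing $S^2\simeq\mathrm{Id}$. Filtering by powers of $\theta$ yields a spectral sequence whose $E_1$ page is $HF(M,L_0,L_1)\otimes\bF[\theta,\theta^{-1}]$ with $d_1$ differential $(1+S_*)\theta$; this is the $E_1$ page asserted in the theorem, and the spectral sequence converges to the localized equivariant homology $HF^{\mathrm{loc}}_{\bZ/2}(M,L_0,L_1)$.

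\emph{The $E_\infty$ page and the main obstacle.} The heart of the argument is the equivariant localization isomorphism
\[ HF^{\mathrm{loc}}_{\bZ/2}(M,L_0,L_1)\cong HF(M^S,L_0^S,L_1^S)\otimes\bF[\theta,\theta^{-1}], \]
which identifies the $E_\infty$ page, the right-hand side being the localized equivariant homology of the fixed locus because $S$ acts trivially there. Intersection points not fixed by $S$ occur in free $\bZ/2$-orbits and are annihilated after inverting $\theta$ by a Tate/Smith argument, while the fixed intersection points are precisely the generators of $CF(M^S,L_0^S,L_1^S)$; one must then match the surviving equivariant differential with the Floer differential of the fixed-locus triple, that is, show that for suitable invariant data every rigid $S$-invariant strip is a rigid strip inside $M^S$. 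This reduces to equivariant transversality, which is the crux. Choosing the invariant $J$ so that $M^S$ is $J$-holomorphic, at an $S$-invariant strip $u$ the linearized operator $D_u$ splits into eigenspaces: an invariant part $D_u^{+}$, governing deformations tangent to $M^S$, and an anti-invariant part $D_u^{-}$, a Cauchy--Riemann operator on $u^{*}NM^S$ with totally real boundary conditions $u^{*}NL_0^S$ and $u^{*}NL_1^S$. The invariant part can be made surjective by a generic invariant perturbation supported on $M^S$, equivalently by regularity of the fixed-locus Floer problem, but $D_u^{-}$ cannot be perturbed freely, since admissible perturbations are constrained to be $S$-invariant. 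I would control $D_u^{-}$ by the Seidel--Smith stabilization argument: the stable tangent-normal isomorphism of Definition~\ref{def:stable-normal} identifies, stably and compatibly with the boundary conditions, the normal polarization $(NM^S,NL_0^S,NL_1^S)$ with the tangent polarization $(TM^S,TL_0^S,NL_1^S)$, which lets one homotope $D_u^{-}$ to a standard model operator whose surjectivity follows from the regularity already arranged for the tangent problem. Making this identification uniform over the moduli space, and checking compatibility of the trivialization with the linearized boundary conditions and the index bookkeeping, is the most delicate part of the proof, and I expect it to be the main obstacle.

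\emph{The dimension inequality.} Finally, the dimension inequality is formal. Both $E_1$ and $E_\infty$ are free modules over the graded field $\bF[\theta,\theta^{-1}]$, and $E_\infty$ is a subquotient of $E_1$, so $\mathrm{rank}_{\bF[\theta,\theta^{-1}]}E_\infty\le\mathrm{rank}_{\bF[\theta,\theta^{-1}]}E_1$. Since $\mathrm{rank}_{\bF[\theta,\theta^{-1}]}\bigl(HF(M,L_0,L_1)\otimes\bF[\theta,\theta^{-1}]\bigr)=\dim_{\bF}HF(M,L_0,L_1)$ and likewise for the fixed locus, this yields $\dim HF(M,L_0,L_1)\ge\dim HF(M^S,L_0^S,L_1^S)$.
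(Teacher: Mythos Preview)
The paper does not contain a proof of this statement. Theorem~\ref{thm:localization} is stated in Section~\ref{sec:Large} as a result due to Large (generalizing Seidel--Smith), with the citation \cite{Large}, and is then used as a black box in Section~\ref{sec:proof}. The surrounding discussion reviews the hypotheses and records two structural remarks about the spectral sequence (its splitting along path components, and the construction of an equivariant model $C(M,L_0,L_1)$ with involution $\widetilde{S}_\#$), but no proof is given or attempted. So there is nothing in the paper to compare your proposal against.

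That said, your outline is a reasonable summary of the Seidel--Smith/Large strategy. One point of friction with the paper's discussion is worth noting: you begin by choosing $S$-invariant Floer data so that $S$ is a chain map on $CF(M,L_0,L_1)$, but the paper explicitly cautions that this is not generally possible on the nose---the equivariant model $C(M,L_0,L_1)$ carrying the strict involution $\widetilde{S}_\#$ is in general different from the standard complex generated by intersection points, and the passage to the standard complex with $d_1 = (1+S_*)\theta$ requires an equivariant family of almost complex structures achieving transversality, per \cite[Proposition 4.3]{HLS-equivariant}. Your later paragraph on equivariant transversality acknowledges exactly this obstruction, so the tension is more one of exposition than of substance, but the opening move as written is not quite available.
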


This generalizes Seidel and Smith's previous work in the case that the normal polarization is stably isomorphic to the trivial polarization \cite[Theorem 1, Theorem 20]{SS10}. These theorems have been applied broadly in Heegaard Floer theory \cite{Hendricks12:dcov-localization, Hendricks:periodic-localization, HLS-equivariant, Boyle, Large, HLL:branched, Parikh}, in Seidel and Smith's symplectic Khovanov homology \cite{SS10, HMR:annular}, and to study powers of symplectomorphisms \cite{Hendricks:symplecto}. 

We make two further remarks concerning the structure of the spectral sequence. First, it follows directly from the construction that as long as the topological map $S$ preserves the components of the path space $\mathcal P(L_0, L_1)$, the spectral sequence splits along the set of these path components; an example of splitting Large's construction along such path components is carried out in \cite[Section 2.2]{HLL:branched}.  

Second, the construction of the spectral sequence goes by constructing an equivariant model $C(M, L_0, L_1)$ for the Lagrangian Floer homology chain complex, via one of several possible equivalent methods \cite{SS10, HLS-equivariant, Large}. This complex has a grading-preserving chain map $\widetilde{S}_\#$, induced by $S$, which may be taken to be a chain involution, such that the spectral sequence is generated by the double complex $(C(M,L_0,L_1)\otimes \bF[\theta,\theta^{-1}], d +(1+\widetilde{S}_{\#})\theta)$. In particular, this model is in general different from the chain complex $\CF(M,L_0,L_1)$ computed in the standard way and generated by intersection points of $L_0$ and $L_1$. Indeed, assuming that $L_0$ and $L_1$ intersect transversely, the action $S$ on the original set of generators $L_0 \pitchfork L_1$ need not induce a chain map on $CF(M,L_0,L_1)$, and if by coincidence it induces a chain map need not induce one which is equvariantly chain homotopic to $\widetilde{S}_{\#}$. However, work of the author with Lipshitz and Sarkar \cite[Proposition 4.3]{HLS-equivariant} shows that if one has any equivariant family of almost complex structures achieving transversality on the original complex $CF(M, L_0, L_1)$, then one may use the chain involution $S_{\#}$ computed with respect to this family to induce the spectral sequence of Seidel-Smith and Large using the double complex $(CF(M,L_0,L_1)\otimes \bF[\theta,\theta^{-1}], d +(1+S_{\#})\theta)$. In particular the $d_1$ differential is then $(1+S_*)\theta$. This is the observation that will allow us to compute $d_1$ for the spectral sequences of Section~\ref{sec:examples} assuming the existence of Heegaard diagrams admitting such families of almost complex structures on the symmetric product.

\section{Proof of localization for real Lagrangian Floer homology} \label{sec:proof}

In this section we prove Theorem~\ref{thm:main}. In accord with the hypotheses, let $(M, L_0, L_1)$ be a triple such that $M$ is exact and has the structure of a symplectization near infinity, and $L_0$ and $L_1$ are compact exact Lagrangians. As previously, suppose that there is an anti-symplectic involution $R$ on $M$ interchanging $L_0$ and $L_1$. 

\begin{proof}[Proof of Theorem~\ref{thm:main}] As explained in the introduction, our strategy is to replace the anti-symplectic involution $R$ with a symplectic involution $S$ on a product symplectic manifold. In particular, consider the symplectic manifold $M \times M^-$ with symplectic form $\tilde{\omega}= \omega \oplus -\omega$. This manifold contains Lagrangian submanifolds $\cL = L_0 \times L_1$ and $\Delta = \{(x,x) : x \in M\}$. We note that $M \times M^-$ is still exact, since if $\omega = d\lambda$ then $\tilde{\omega}= d\left(\lambda \oplus -\lambda\right)$. Similarly, both $\cL$ and $\Delta$ are exact Lagrangians. We see that $\cL$ is compact. As for $\Delta$, one may check that if $M$ has the structure of a symplectization near infinity then the same is true of $M\times M^-$, and the diagonal is a conical Lagrangian; the details are carried out in \cite[Section 2.2]{Hendricks:symplecto}. It is additionally well-known that 
\[ \HF(M \times M^-, \cL, \Delta) \simeq HF(M, L_0, L_1)\]
via folding strips; c.f., e.g., \cite[Proposition 8.2]{Ganatra} or \cite[Lemma 2.9]{Hendricks:symplecto} for a review.

We now consider an involution on this product, as follows.
\begin{align*} 
S: M\times M^{-} \rightarrow M \times M^{-} \\
		(x,y) \mapsto (R(y), R(x) ) 
\end{align*}
We first observe that $S^*(\tilde{\omega}) = R^*(-\omega)\oplus R^*\omega = \omega \oplus -\omega = \tilde{\omega}$, so $S$ is indeed symplectic; furthermore since $R^2 = \mathrm{Id}$, we see that $S$ is also an involution. Moreover with respect to $S$ we have fixed sets
\begin{align*}
(M\times M^-)^{S} &= \{(x,R(x)): x \in M\} \\
\Delta^{S} &= \{(x,x): x = R(x) \} \\
\cL^{S} &= \{(x,y): x \in L_0, y \in L_1, y = R(x)\}.
\end{align*}
We see that if we consider the symplectic embedding
\begin{align*}
\phi \co (M, \omega) &\rightarrow (M \times M^{-}, \tilde{\omega}) \\
		x &\mapsto (x, R(x))
\end{align*}
we find that $\phi(M) = (M\times M^-)^{S}$ up to a factor of $1/2$ on the symplectic form, and furthermore $\phi(L_0) = \cL^{S}$ and $\phi(M^R) = \Delta^{S}$. We conclude that
\[ HF((M\times M^-)^{S}, \cL^{S}, \Delta^{S}) \simeq HFR(M, L_0, L_1).\]

It remains to check that $(M \times M^-, \cL, \Delta)$ admits a stable tangent normal isomorphism, after which we may apply Theorem~\ref{thm:localization} to obtain the spectral sequence of Theorem~\ref{thm:main}. We observe that the tangent bundle to $(M \times M^-)^{S}$ inside $M \times M^-$ is 
\begin{equation} \label{eqn:tangent} T_{(x, R(x))}((M\times M^-)^{S}) = \{ (v, R_*v) : v \in T_xM \}.\end{equation}
To parametrize the normal bundle, we choose an almost complex structure $J$ on $M$ which is compatible with $\omega$, that is, so that $\omega(Jv,Jw) = \omega(v,w)$ and $\omega(v,Jv) >0$ for all nonzero $v$. We further insist that $J$ be $R$-symmetric in the sense that $J R_* = -R_*J$, which as discussed in Section~\ref{subsec:realsetup} is always possible to arrange. (Note that this almost complex structure need not achieve transversality, since it is being used purely to establish the vector bundle isomorphism of Theorem \ref{thm:localization}.) We then have that $\tilde{J}=J \oplus -J$ is an almost complex structure on $M \times M^-$ compatible with $\tilde{\omega}$ with the property that $J S_* = S_* J$. We may write the normal bundle to $(M \times M^-)^{S}$ in $M \times M^-$ in the form
\begin{equation} \label{eqn:normal} N_{(x, R(x))}((M\times M^-)^{S}) = \{ (Jv, JR_*v) : v \in T_xM \}.\end{equation}
We pause to confirm that this is correct. It suffices to check that every vector in \eqref{eqn:normal} is orthogonal to every vector in \eqref{eqn:tangent}, since they are each clearly half-dimensional vector bundles. Recall that the metric on $M \times M^-$ is determined by the symplectic form and choice of compatible almost complex structure, and is given by
\[ g_{\tilde{J}} \langle(v_1,v_2), (w_1,w_2)\rangle := \tilde{\omega}((v_1,v_2), \tilde{J}(w_1,w_2)). \]
We therefore see that given $v,w \in T_xM$, we have that
\begin{align*}
g_{\tilde{J}} \langle (v, R_*v), (Jw, JR_*w) \rangle &= \tilde{\omega} ((v, R_* v), \tilde{J}(Jw, JR_*w)) \\
														&= \tilde{\omega}( (v, R_*v), (J^2 w, -J^2 R_*w)) \\
														&= \tilde{\omega} ( (v, R_* v), (-w, R_*w) ) \\
														&= \omega (v,-w) - \omega (R_* v, R_*w) \\
														&= -\omega(v,w) + \omega(v,w) \\
														&=0.
\end{align*}
We now describe a stable tangent-normal isomorphism between the tangent and normal polarizations of $((M\times M^-)^{S}, \Delta^{S}, \cL^{S})$. Our starting map is
\begin{align*}
\psi: T((M \times M^-)^{S}) &\rightarrow N( (M \times M^-)^{S}) \\
		(v,R_*v) &\mapsto (Jv, JR_*v)
\end{align*} 
We begin by checking that this is indeed an isomorphism of complex vector bundles; in particular, we see that
\[ \psi(\tilde{J}(v,R_*v)) = \psi((Jv, -JR_*v)) = (J^2v, -J^2R_*v) = \widetilde{J}(Jv,JR_*v) = \widetilde{J}\psi(v,R_*v).\]
We now consider the effects of the map $\psi$ on the tangent and normal bundles of $\Delta^{S}$ inside $\Delta$ and $\cL^{S}$ inside of $\cL$. We begin with the bundles associated with $\Delta^{S}$. Observe that a point $(x,x) \in \Delta^{S}$, such that $x=R(x)$, we have the following:
\begin{align*}
T_{(x,x)}\Delta &=\{(v,v) : v \in T_xM \} \\
T_{(x,x)} \Delta^{S} &= \{(v,v) : R_*v =v \} \\
N_{(x,x)} \Delta^{S} &= \{(Jw, Jw): R_*w =w \}.
\end{align*}
We observe that $\psi(T_{(x,x)} \Delta^{S}) = N_{(x,x)} \Delta^{S}$ on the nose, with no homotopy of Lagrangian bundles required. Now we turn our attention to the bundles associated to $\cL^{S}$. We see that at a point $(x,R(x)) \in \cL^{S}$, we have
\begin{align*}
T_{(x,R(x))} \cL &= \{(v,w): v \in T_xL_0, w \in T_{R(x)}L_1 \} \\
T_{(x,R(x))} \cL^{S} &= \{(v,R_*v):  v \in T_x L_0 \} \\
N_{(x, R(x))} \cL^{S} &= \{(w, -R_*w) : w \in T_x L_0 \}
\end{align*}

We now consider the effect of $\psi$ on $T \cL^S$. We observe that $\psi(T \cL^{S})$ is the totally real bundle whose fiber over each $(x, R(x))$ in $\cL^{S}$ is 
\[\{(Jv, JR_*v) : v \in T_x L_0 \}. \]
This image is exactly $\widetilde{J}(N \cL^{S})$. As it is always the case that a Lagrangian subbundle $F$ of a vector bundle $E$ is homotopic to $J(F)$ through rotation through Lagrangian subbundles, we are done.
\end{proof}

\section{Examples in Heegaard Floer theory} \label{sec:examples}

In this section we complete the proofs of Theorems \ref{thm:branched} and Proposition \ref{prop:si}, and make some remarks on the resulting spectral sequences.

\subsection{3-manifolds, including branched covers}

We now discuss the proof of Theorem~\ref{thm:branched}. 

\begin{proof}[Proof of Theorem \ref{thm:branched}] Let $(Y, \tau)$ be a real 3-manifold. By  \cite[Proposition 3.2]{GM:real}, which uses \cite[Proposition 2.4]{Nagase}, there is a real Heegaard diagram $H= (\Sigma, \alphas, \betas, z)$ representing $Y$. This Heegaard diagram may be taken to be weakly admissible, under which circumstance $\Sym^g(\Sigma - \{z\})$ admits an exact symplectic form $\omega$ with respect to which the two Lagrangian tori $\T_{\alphas}$ and $\T_{\betas}$ are exact; moreover, $\Sym^g(\Sigma - \{z\})$ admits an exhausting function $\phi$ whose critical points are contained in a compact set, giving the manifold the structure of a symplectization near infinity. (For a discussion of the symplectic form, see \cite{Perutz:Hamiltonian}; for details on how weak admissibility implies exactness of the Lagrangians and the structure of the manifold near infinity, see \cite[Proof of Proposition 4.2]{HLL:branched}.) In particular, this situation satisfies all of the hypotheses of Theorem~\ref{thm:main}, immediately giving us a spectral sequence
\begin{equation} \label{eqn:branchednospinc} \HFhat(Y) \otimes \bF[\theta, \theta^{-1}] \rightrightarrows \HFRhat(Y,\tau) \otimes \bF[\theta, \theta^{-1}].\end{equation}
We now remark on the splitting of this spectral sequence into $\spin^c$ structures.  Recall that our involution $S$ is constructed by considering the product 
\[\Sym^g(\Sigma-\{z\}) \times \Sym^g(\Sigma-\{z\})^- = \Sym^g(\Sigma - \{z\}) \times \Sym^g(-\Sigma -\{z\}).\]
Recall that if ${\bf x} \in \T_{\alphas} \pitchfork \T_{\betas}$, then $({\bf x},{\bf x})$ is a generator of the chain complex in the product, and the involution $S$ sends $({\bf x}, {\bf x}) \mapsto (R({\bf x}), R({\bf x}))$. In light of the identification between the complexes 
\[\CF(\Sym^g(\Sigma-\{z\}), \T_{\alphas}, \T_{\betas}) \simeq \CF(\Sym^g(\Sigma - \{z\}) \times \Sym^g(-\Sigma -\{z\}), \T_{\alphas} \times \T_{\betas}, \Delta)\] 
via folding strips, this implies that the set map induced by $S$ on the generators of $\CF(\Sym^g(\Sigma-\{z\}), \T_{\alphas}, \T_{\betas})$ sends ${\bf x}$ to $R({\bf x})$. Per Lemma~\ref{lem:spincsame}, these generators are in the same $\spin^c$ structure. We emphasize that this set map on the generators is not necessarily a chain map at this point; however, it nevertheless preserves the path components of $\mathcal P(L_0, L_1)$, implying that the spectral sequence splits along $\spin^c$ structures.

We now compute the $d_1$ differential in the spectral sequence in the special case that there is a Heegaard diagram $H$ which has an $R$-symmetric almost complex structure $J$ such that $\Sym^g(J)$ achieves transversality; that is, such that we have a set of Heegaard data $\cH$ suitable for defining Heegaard Floer homology such that $\overline{\cH}$ the conjugate set of Heegaard data is equal to $\tau (\cH)$ the pushfoward along $\tau$. In this situation the action of $S$ on generators is in fact a chain map. Again using the identification between the chain complex in the product and $\CFhat(\cH)$, the action is the chain map $\eta \circ \eta_{\tau}$ which sends ${\bf x} \mapsto R({\bf x})$ in $\CFhat(\cH)$. The discussion at the end of Section~\ref{sec:Large} now implies that the $d_1$ differential of the spectral sequence above is $(1+(\iota\tau)_*)\theta$. \end{proof}

We immediately also have a proof of Corollary~\ref{cor:lspace}.

\begin{proof}[Proof of Corollary~\ref{cor:lspace}] Let $\Sigma(K)$ be an $L$-space. Then $\dim(\HFhat(\Sigma(K), \mathfrak s)) = 1$ for all $\mathfrak s \in \Spin^c(Y)$. The dimension inequality of Theorem~\ref{thm:main} is in this case $\dim \HFhat(\Sigma(K), \mathfrak s) \geq \dim \HFRhat(\Sigma(K), \tau, \mathfrak s)$, from which we obtain the statement.
\end{proof}

\subsection{Strongly invertible knots} \label{sec:examples-knots}

Finally, we discuss the application to knot symmetry. Let $K$ be a strongly-invertible knot in $S^3$ and let the symmetry on $(S^3,K)$ be $\tau$. Given a transvergent diagram for $K$, we may construct a real Heegaard diagram as in Figure \ref{fig:transvergentdiagrams}. On the left, we see a relatively intuitive multi-pointed diagram on the two-sphere: we give the knot the structure of an equivariant bridge diagram with an odd number of bridges, label the endpoints of the bridges with basepoints according to the orientation of the knot, and circle all but one overcrossing bridge with $\beta$ curves and all but one undercrossing bridge with $\alpha$ curves. The involution is a reflection fixing the centerline of the diagram together with the point at infinity. It thus reverses the orientation of the Heegaard diagram and exchanges $\alpha$ curves with $\beta$ curves and vice versa. However, this diagram has basepoints off the fixed set; in order to apply Guth and Manolescu's construction to it, we wish to eliminate them. We may do this by adding handles connecting what were previously a pair of adjacent basepoints, and replacing the curves previously encircling the arc between that pair of basepoints with a curve running over the handle, obtaining a Heegaard diagram such as the one on the right-hand side of Figure \ref{fig:transvergentdiagrams}. If $K$ is directed we place the basepoints $z$ and $w$ so that $z$ lies at the starting point of the oriented half-axis and $w$ lies at the endpoint. The reader is invited to compare these to the intravergent-type diagrams of \cite[Figure 4]{Parikh}, which also incorporate a choice of direction, and carry an orientation-preserving action.

\begin{figure}
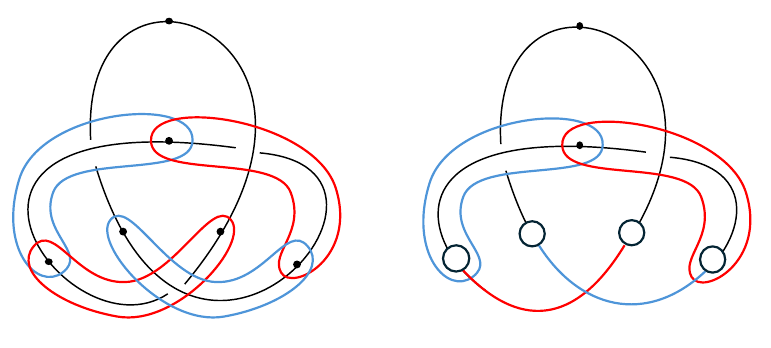
\caption{Left: A multi-pointed real Heegaard diagram for the right-handed trefoil with its unique strong inversion. Right: A related doubly-pointed real Heegaard diagram for the same knot and action.}
\label{fig:transvergentdiagrams}
\end{figure}

We are now ready to discuss the proof of Proposition~\ref{prop:si}.

\begin{proof} [Proof of Proposition~\ref{prop:si}] Let $H$ be a real Heegaard diagram for the directed strongly invertible knot $K$, as on the right side of Figure~\ref{fig:transvergentdiagrams}. As in the proof of Theorem~\ref{thm:branched}, it is straightforward to show that the triple $(\Sym^g(\Sigma -\{z,w\}), \T_{\alphas}, \T_{\betas})$ satisfies the symplectic requirements of Theorem~\ref{thm:main}. This produces the claimed spectral sequence; since by Lemma~\ref{lem:Alexandersame} the action $S$ respects the path components of $\mathcal P(L_0, L_1)$ it splits along Alexander gradings. An argument again nearly identical to the preceding case confirms that in the presence of a family of $R$-symmetric almost complex structures achieving transversality, the action induced by the map $S$ used in constructing the spectral sequence is the composition $\eta_K \circ \eta_{\tau} \circ sw$; by Lemma~\ref{lem:iotaKtauK} in homology this becomes $(\iota_K \tau_K)_*$. The spectral sequence therefore has $d_1 = (1+(\iota_K \tau_K)_*)\theta$ in this case.
\end{proof}

\begin{example} We conclude with two speculative computations, both carried out under the conjectural assumption that the $d_1$ differential of the spectral sequence of Proposition~\ref{prop:si} is indeed $(1+(\iota_K\tau_K)_*)\theta$. We begin with the figure-eight knot, which is thin. The knot Floer homology is five dimensional. Using one typical set of names for the generators (see, e.g. \cite[Figure 5]{DMS:equivariant}), we have a chain complex with trivial differential \[\CFKhat(S^3,4_1,1) \simeq \mathbb F\langle b \rangle, \qquad \CFKhat(S^3,4_1,0) \simeq \mathbb F_2\langle a,x,e \rangle, \qquad  \CFKhat(S^3,4_1, -1) = \mathbb F \langle c\rangle.\]
The map $\iota_K$ was computed for thin knots in \cite[Section 8]{HMInvolutive}. On $\CFKhat(S^3,4_1)$ it is
\[ \iota_K(b) = c \qquad \iota_K(c)=b \qquad \iota_K(a) = a+x \qquad \iota_K(x) = x+e \qquad \iota_K(e)=e. \]
The figure-eight knot has two inequivalent strong inversions, related by mirroring. The maps associated to them have been computed by Dai, Mallick, and Stoffregen \cite[Example 2.26]{DMS:equivariant} and are
\[ \tau_K(b) = c \qquad \tau_K(c)=b \qquad \tau_K(a) = a+x \qquad \tau_K(x)=x \qquad \tau_K(e) = e \]
and
\[ \sigma_K(b) = c \qquad \sigma_K(c)=b \qquad \sigma_K(a) = a \qquad \sigma_K(x)=x+e \qquad \sigma_K(e) = e. \]
In this case the potential twist from changing the direction does not affect the $(\iota_K, \tau_K)$ or $(\iota_K, \sigma_K)$ chain homotopy equivalence class of the resulting complex, and therefore we may ignore it. We see that $\iota_K \tau_K$ fixes $b, c, a+x$ and interchanges $x$ and $x+e$, whereas $\iota_K \sigma_K$ fixes $b, c, x$ and interchanges $a$ and $a+x$. In particular we conclude that if $H$ is a Heegaard diagram for the figure-eight knot with either strong inversion we have
\[ \dim \HFKR(H, 1) = \dim \HFKR(H, 0) = \dim \HFKR (H, -1) =1. \]
Note that the author is not presently aware of an example of a thin knot for which, if one assumes that the $d_1$ differential is $(1+(\iota_K\tau_K)_*)\theta$, it does not follow that $\dim \HFKR(H, 0)=1$, although this may be an artifact of the fact there there exist relatively few computations of $\tau_K$ in the literature.

We next consider the equivariant connect sum of the torus knot $K=T_{3,4}$ with itself with a connected sum of the unique strong inversion on each summand. The equivariant connected sum is an operation on directed strongly invertible knots is due to Sakuma \cite{Sakumainvertible}; a nice picture appears in \cite[Definition 2.4]{DMS:equivariant}. In particular, this operation depends on the direction, but our choice of directions does not impact the following algebra. The knot Floer homology of the original knot has a chain complex with trivial differential 
\[\CFKhat(S^3,K,3) \simeq \mathbb F\langle x_{31} \rangle, \qquad \CFKhat(S^3,K,-3) \simeq \mathbb F_2\langle x_{32} \rangle\] \[ \CFKhat(S^3,K,2) \simeq \mathbb F_2\langle x_{21} \rangle\qquad \CFKhat(S^3,K,-2) \simeq \mathbb F_2\langle x_{22} \rangle \] \[\CFKhat(S^3,K, 0) = \mathbb F \langle x_0 \rangle.\] 
The maps $\iota_K$ and $\tau_K$ are both the unique Alexander-grading reversing chain isomorphism, fixing $x_0$ and exchanging $x_{j1}$ with $x_{j2}$ for $j=2,3$. We now consider the connected sum of the knot with itself. Ozsv{\'a}th and Szab{\'o} show that knot Floer homology has a K{\"u}nneth formula \cite[Section 7]{OSKnots}, and in particular $\CFKhat(K_1 \# K_1, i) \simeq \bigoplus_{i_1+i_2=i} \CFKhat(K_1,i_1) \otimes \CFKhat(K_2,i_2)$. Focusing on Alexander grading zero, which will be the most interesting, we see that
\[ \CFKhat(K\#K, 0) \simeq \bF \langle x_0x_0, x_{21}x_{22}, x_{22}x_{21}, x_{31}x_{32}, x_{32}x_{31} \rangle \]
Here concatenation denotes tensor product. The maps $\iota_K$ and $\tau_K$ do not have the same product formulas. Applying the tensor product formula for $\tau_K$ of \cite[Theorem 1.7]{DMS:equivariant}, we see that $\tau_K$ is the map interchanging the two factors in the tensor product. Applying the product formula for $\iota_K$ of \cite[Theorem 1.1]{ZemConnectedSums}, which requires the full knot Floer complex $CFK^-(T_{3,4})$ to compute, we see that $\iota_K$ is the map which sends $x_{22}x_{21} \mapsto x_{21}x_{22} + x_{31}x_{32}$ and otherwise exchanges the factors in the tensor product. This implies that the composition $\iota_K\tau_K$ is given by
\[ x_{21}x_{22} \leftrightarrow x_{21}x_{22}+x_{31}x_{32}\qquad \qquad x_0x_0, x_{22}x_{21}, x_{32}x_{31} \text{ fixed.}\]
We conclude that $\dim(\HFKR(H,0)) = 3$. A similar computation shows that in all other Alexander gradings $\iota_K =\tau_K$ and therefore $\dim(\HFKR(H,i)) = \dim(\HFKhat(K\#K,i))$. \end{example}

\begin{rem} \label{rem:speculative} The enthusiastic reader is cautioned that not all knot symmetries should be expected to admit a notion of real knot Floer homology. Guth and Manolescu's construction assumes an orientation-preserving symmetry on a 3-manifold $Y$ whose fixed set is nonempty and has codimension two; this means that one should not expect to be able to deal with (for example) freely two-periodic or strongly amphichiral knots. More subtly, if $H$ is a real Heegaard diagram for some symmetric knot $K$ with $w$ and $z$ fixed by $R$, then $R(H)$ is necessarily a Heegaard diagram for $K$ with its orientation reversed: the orientation of the surface and the order of the alpha and beta curves changes, but the basepoints remain in place. This suggests there is no real doubly-pointed Heegaard diagram for the lift $\widetilde{K}$ of a knot $K$ in its branched double cover, where the action preserves the orientation of the knot, and in particular no analog of the spectral sequence~\eqref{eqn:original} in real knot Floer homology. On the other hand, if one were to develop a notion of real Heegaard Floer knot homology for which the $z$ and $w$ basepoints may be away from the fixed set and interchanged by the action, it seems that it could be possible to study two-periodic knots; in which case one should obtain a spectral sequence whose $d_1$ differential sends $\HFKhat(S^3,K,i)$ to $\HFKhat(S^3,K,-i)$, fixing only $\HFKhat(S^3,K,0)$. We leave this development for the future.
\end{rem}

\section{Remarks on developments since the first appearance of this note} \label{sec:since-then}

For the reader's convenience, we conclude by mentioning some developments since this note first appeared. Most notably, a theory of real nice Heegaard diagrams has been developed independently by Lipshitz-Oszv{\'a}th \cite{LO:real-bordered} and Binns-Guth-Xiao \cite{BGX:sutured}, from which both groups confirm the conjectural computation of the $d_1$ differential of the spectral sequence appearing in Theorem \ref{thm:branched}. Moreover, Xiao \cite{Xiao:real-link} has studied real link Floer homology for strongly invertible links in general 3-manifolds and given a proof of invariance, which in particular shows the real knot Floer homology of strongly invertible knots in this paper is an invariant of the directed strongly invertible knot. Together with the work of \cite{BGX:sutured} this confirms the conjectural computation of the $d_1$ differential in Proposition \ref{prop:si}. This removes the speculative nature of the examples discussed in Section~\ref{sec:examples-knots}. Xiao's work together with code written by Z. Li \cite{Li:real-grid} produces many new examples, including of thin knots with $\dim{\HFKR}(S^3,K, \tau, 0)$ more than one.

Xiao further studies the real knot Floer homology of doubly periodic knots, producing \cite[Theorem 4.11]{Xiao:real-link} the spectral sequence promised in Remark~\ref{rem:speculative} of the form
\begin{equation} \label{eq:periodic} \HFKhat(S^3, K, 0) \otimes \mathbb F_2[\theta, \theta^{-1}] \rightrightarrows \HFKR(S^3,K) \otimes \mathbb F_2[\theta, \theta^{-1}].
\end{equation}
For completeness, we add the analysis of the $d_1$ differential of the spectral sequence \eqref{eq:periodic} in the presence of an $R$-symmetric family of almost complex structures achieving transversality as a final note. Following the argument of Proposition~\ref{prop:si}, we will see that this first differential is
\[ d_1 = (1 +(\iota_K \tau_K^{-1})_*)\theta = (1 +(\iota_K\tau_K\xi_K)_*)\theta = (1 +(\iota_K\xi_K \tau_K)_*)\theta\]
where $\tau_K \co \CFKhat(\cH) \rightarrow \CFKhat(\cH)$ is the action of a two-periodic knot on knot Floer homology defined in \cite[Section 8]{DMS:equivariant} and $\xi_K$ is as previously the Sarkar basepoint-moving involution.  To see this, we retrace the argument of Lemma~\ref{lem:iotaKtauK} for the periodic case. From \cite[Section 8.1]{DMS:equivariant}, for a two-periodic knot $K$ in $S^3$ with action $\tau$, the map $\tau_K$ induced by the action is the composition 
\[ \CFKhat(\cH) \xrightarrow{\eta_{\tau}} \CFKhat(\tau\cH) \xrightarrow{\eta_{\rho}} \CFKhat(\rho \tau \cH) \xrightarrow {\Phi{\rho \tau \cH, \cH}} \CFKhat(\cH). \]
Note that for a two-periodic knot, $\tau_K^2 \simeq \xi_K$, and in particular $\tau_K^{-1} \simeq \tau_K \circ \xi_K \simeq \xi_K \circ \tau_K$ \cite[Theorem 8.1]{DMS:equivariant}. We also remind the reader that the Sarkar involution $\xi_K$ is itself homotopic to $\eta_{\rho}^2$, and $\eta_{\rho}^4$ is homotopic to the identity. If $\cH = (H,J)$ is a real Heegaard diagram for a two-periodic knot in the sense of Xiao \cite[Section 2]{Xiao:real-link} together with an $R$-symmetric family of almost complex structures achieving transversality, $\tau \cH = \overline{\cH}$ precisely. We compute 
\begin{align*} 
\iota_K \circ \tau_K^{-1} &\simeq \iota_K \circ \xi_K \circ \tau_K \\
						&\simeq \eta_K \circ \Phi_{\rho\cH, \overline{\cH}} \circ \eta_{\rho}\circ \eta_{\rho}^2 \circ \Phi_{\rho \tau \cH, \cH} \circ \eta_{\rho} \circ \eta_{\tau} \\
						&\simeq \eta_K \circ \Phi_{\rho\cH, \overline{\cH}} \circ \eta_{\rho}^3 \circ \eta_{\rho} \circ \Phi_{\tau \cH, \rho\cH} \circ \eta_{\tau} \\
						& \simeq \eta_K \circ \Phi_{\rho\cH, \overline{\cH}}  \circ \Phi_{\overline{\cH}, \rho\cH} \circ \eta_{\tau} \\
						&\simeq \eta_K \circ \eta_{\tau}.
\end{align*}
We finally observe that $\eta_K\circ\eta_{\tau}$ is exactly the map produced by the geometry of the spectral sequence in the two-periodic knot case; since both $\eta_K$ and $\eta_{\tau}$ exchange basepoints, we do not pick up an additional factor of $sw$ as we do in the proof of Proposition~\ref{prop:si}. This concludes the argument.

This conclusion is perhaps unsurprising, as the argument of Lemma~\ref{lem:iotaKtauK} actually relates the $d_1$ differential in the strongly invertible case to $(1+(\iota_K\tau_K^{-1})_*)$ as well; the difference between the two analyses arises from the fact that for a strongly invertible knot $K$, the induced map $\tau_K$ is its own inverse up to homotopy, and for a periodic knot, the induced map $\tau_K$ has $\tau_K^{-1} \simeq \tau_K \xi_K \simeq \xi_K \tau_K$ \cite[Theorem 8.1]{DMS:equivariant}.

\bibliographystyle{custom}
\def\MR#1{}
\bibliography{biblio}

\end{document}